\documentclass[a4paper,10pt]{amsart}
\textwidth16.1cm\textheight21cm\oddsidemargin-0.1cm
\evensidemargin-0.1cm
\usepackage{verbatim}
\usepackage{amsmath}
\usepackage{amssymb,color}

\usepackage[colorlinks]{hyperref}
\definecolor{linkblue}{RGB}{1,1,190}
\definecolor{citered}{RGB}{190,1,1}
\hypersetup{linkcolor=linkblue,urlcolor=linkblue,citecolor=citered}

\usepackage{mathtools}
\usepackage{tabularx}
\usepackage{multirow}
\usepackage{xcolor}

\theoremstyle{plain}
\newtheorem{theorem}{\bf Theorem}[section]
\newtheorem{proposition}[theorem]{\bf Proposition}
\newtheorem{lemma}[theorem]{\bf Lemma}
\newtheorem{corollary}[theorem]{\bf Corollary}

\theoremstyle{definition}

\newtheorem{remark}[theorem]{\bf Remark}

\newcommand{\N}{\mathbb N}

\DeclareMathOperator{\ord}{ord}

\DeclareMathOperator{\supp}{supp}

\numberwithin{equation}{section}

\subjclass{11B13, 11B30, 20M13}
\thanks{This work was supported by the Austrian Science Fund FWF (Project Number W1230) and by the ANR (project ANR-21-
CE39-0009 - BARRACUDA)}

\begin{document}\title[Sets of Cross Numbers]{Sets of Cross Numbers of Sequences over Finite Abelian Groups}

\author{Aqsa Bashir \and Wolfgang A. Schmid}

\address{University of Graz, NAWI Graz\\
Department of Mathematics and Scientific Computing\\
Heinrichstra{\ss}e 36\\
8010 Graz, Austria}
\email{aqsa.bashir@uni-graz.at}

\address{Laboratoire Analyse, G{\'e}om{\'e}trie et Applications, LAGA, Universit{\'e} Sorbonne Paris Nord, CNRS, UMR 7539, F-93430, Villetaneuse, France
 \\ and \\ Laboratoire Analyse, G{\'e}om{\'e}trie et Applications (LAGA, UMR 7539) \\ COMUE  Universit{\'e} Paris Lumi{\`e}res \\  Universit{\'e} Paris 8, CNRS \\  93526 Saint-Denis cedex, France} \email{schmid@math.univ-paris13.fr, wolfgang.schmid@univ-paris8.fr}

\keywords{finite abelian groups, zero-sum sequences, cross numbers}
\begin{abstract}
Let $G$ be a finite abelian group with $\exp(G)$ the exponent of $G$. Then $\mathsf W(G)$ denotes the set of cross numbers of minimal zero-sum sequences over $G$ and $\mathsf w(G)$ denotes the set of all cross numbers of non-trivial zero-sum free sequences over $G$. 
It is clear that  $\mathsf W(G)$ and $\mathsf w(G)$ are bounded subsets of  $\frac{1}{\exp(G)}\mathbb{N}$ with maximum $ \mathsf K(G)$ and $\mathsf k(G)$, respectively (here $\mathsf{K}(G)$ and $\mathsf{k}(G)$  denote the large and the small cross number of $G$, respectively). 
We give results on the structure of $\mathsf W(G)$ and $\mathsf w(G)$. 
We first show that both sets contain long arithmetic progressions and that only close to the maximum there might be some gaps. 
Then, we provide groups for which $\mathsf W(G)$ and $\mathsf w(G)$ actually are arithmetic progressions, and argue that this is rather a rare phenomenon.  
Finally, we provide some results in case there are gaps.  	
\end{abstract}
\maketitle

\smallskip
\section{Introduction}\label{1}
\smallskip

Let $(G, +)$ be a finite abelian group. For a sequence $S = g_1 \dots g_{\ell}$ over $G$, that is a collection of elements $g_i$ of $G$ where repetitions are allowed (for a formal definition and other undefined terminology see below), the cross number of $S$ is defined as 
\[
\mathsf{k}(S) = \frac{1}{\ord(g_1)} + \dots + \frac{1}{\ord(g_{\ell})}.\]
This can be seen as a weighted version of the length $\ell$ of the sequence. The term cross number was introduced by Krause in 1984 \cite{Kr84, Kr-Za91}.
The cross number is an interesting zero-sum constant and in particular plays an important role in the factorization theory of Krull monoids, see for example \cite{Ge93a, Ge94b, Ge-Sc94, Ge-Sc96, Ge-Sc97,  El-Hu05, Gi08b, Gi09a, Ge-Gr09c, Gi12b, Kr13a, He14a, Ki15a,  Wa20a, Bu-Hu24a} for various contributions on the cross number.   
 
In analogy with the Davenport constant of a finite abelian group, one defines the large cross number of $G$, denoted $\mathsf{K}(G)$ as the maximal cross number of a minimal zero-sum sequence, and the small cross number of $G$, denoted $\mathsf{k}(G)$, as the maximal cross number of a zero-sum free sequence. 
Recall that the large Davenport constant of a finite abelian group  $G$, denoted $\mathsf{D}(G)$, is defined as the maximal length of a minimal zero-sum sequence of $G$ while the small Davenport constant, denoted $\mathsf d(G)$, is the maximal length of a zero-sum free sequence. 

For the small Davenport constant it is obvious that zero-sum free sequences of each length up to the maximum exist as every subsequence of a zero-sum free sequence is zero-sum free.
Also, for the large Davenport constant one can see without difficulty that for each length smaller than the large Davenport constant there exists a minimal zero-sum sequence of that length as well; it suffices to replace two elements in a minimal zero-sum sequence by their sum to obtain a minimal zero-sum sequence whose length is diminished by $1$. 

By contrast, for the cross number the analogous problem is more subtle. Of course, we cannot obtain all the rational numbers up to the maximum as cross numbers, as the cross number of each sequence over $G$ is easily seen to be an integral multiple of $\frac{1}{\exp(G)}$. The actual question is if each integral multiple of $\frac{1}{\exp(G)}$ up to the maximum occurs as a cross number. The basic reasoning that we just recalled for the Davenport constant does not translate to this situation, and indeed in earlier work on the subject it was proved that, depending on the group, there are certain restrictions on the values of the cross numbers of minimal zero-sum sequences, see   \cite{Ch-Ge96, B-C-M-P04}. In recent literature, the sets of values attained by arithmetic invariants (including the sets of elasticities, of catenary degrees, and more) found wide attention, because knowing all the values of the invariants allows a finer understanding of the problem than just knowing its maximum, see for example  \cite{Ge-Zh16a, Ba-Ne-Pe17a, Fa-Ge19a, Ge-Zh19a, Zh19a}.

In the present paper we continue the investigations on the cross numbers of minimal zero-sum sequences and in addition study the values of cross numbers of zero-sum free sequences. The latter problem did not yet get much attention.  It turns out that, depending on the group, the two sets can be essentially identical, with one being a shift of the other by $\frac{1}{\exp(G)}$, yet they can also be quite different.

\section{Preliminaries}
\label{sec_prel}

Let $\mathbb{N}$ denote the set of positive integers  and let $\mathbb{N}_0=\mathbb{N}\cup \{0\}$. For integers $a, b \in \mathbb{Z}$, we set $[a,b]=\{z \in \mathbb{Z} \mid a\leq z\leq b \}$ the interval of integers. Given subsets $A,B \subseteq \mathbb{Z}$ and $\lambda \in \mathbb{R}$, we set $A + B = \{a+b \mid a \in A, \, b\in B \}$ the sumset of $A$ and $B$, and we set  $\lambda A=\{\lambda a \mid a \in A\}$ the dilation of $A$ by $\lambda$; since we hardly use it, we do not introduce a notation for the $s$-fold sumset of $A$, that is the sumset of $s$ copies of $A$.

For $n \in \mathbb{N}$ let $C_n$ be a cyclic group of order $n$; we use additive notation. Let $(G, +, 0)$ be an additive finite abelian group. There exist unique integers $1 < n_1 \mid \dots \mid n_r$ such that $G \cong C_{n_1} \oplus \dots \oplus C_{n_r}$. Moreover, there exist unique prime-powers $1 < q_1 \le  \dots \le  q_s$ such that  $G\cong C_{q_1} \oplus \dots \oplus C_{q_s}$.  The integer $n_r$ is called the exponent of the group, and the order $\ord(g)$ of each element $g \in G$ divides $\exp(G)$. 
One calls $r$ the rank of $G$ and $s$ the total rank of $G$. In case $\exp(G)$ is a prime-power, the two coincide and one calls $G$ a $p$-group. A $p$-group is called an elementary $p$-group if the exponent is prime. 
For a given prime $p$ one calls the number of prime powers $q_i$ that are a $p$-power the $p$-rank of $G$, denoted by $\mathsf{r}_p(G)$.
It is non-zero if and only if the prime $p$ divides the exponent.  

Note that if $|G| = 1$, then the exponent is $1$, while all the ranks  are $0$.

As indicated in the introduction, the focus of the paper is on cross numbers of sequences over a finite abelian group $G$. Informally,  a sequence over a finite abelian group $G$  is a collection of elements of $G$ where repetitions of elements are allowed yet the ordering of the terms is typically disregarded. 
Formally, a sequence over $G$ is an element of the free abelian monoid $\mathcal{F}(G)$ over $G$. 
That is, a sequence $S$ over $G$ can be written uniquely as $S = \prod_{g \in G} g^{v_g}$ with $v_g \in \mathbb{N}_0$ for each $g \in G$; moreover it can be written as $S = g_1 \dots g_{\ell} $ where $g_i \in G$ for each $i \in [1, \ell]$ and these elements are unique up to ordering. The neutral element of $\mathcal{F}(G)$ is denoted by $1$, unless there is a risk of confusion, and is called the trivial sequence. Sometimes we want to consider only sequences that contain elements in a subset $G_0 \subseteq G$; in that case we use the notation $\mathcal{F}(G_0)$. 

One calls 
\begin{itemize}
\item  $\sigma(S) = \sum_{g \in G}v_gg=  \sum_{i=1}^{\ell}g_i$ the sum of $S$,  
\item  $|S| = \sum_{g \in G}v_g=  \ell $ the length of $S$, 
\item $\mathsf{k}(S) = \sum_{g \in G}\frac{v_g}{\ord(g)}=  \sum_{i=1}^{\ell}\frac{1}{\ord(g_i)}$ the cross number of $S$.
\end{itemize}

For a sequence $S$, a subsequence $T$ of $S$ is a divisor of $S$ in $\mathcal{F}(G)$.  We denote by $\Sigma(S) = \{\sigma (T) \colon 1 \neq T \mid S\}$ the set of sums of non-trivial subsequences of $S$. Moreover, $\supp(S) = \{g_1, \dots, g_{\ell}\}$, called the support of $S$,  denotes the set of elements that occur in $S$.  
A sequence $S$ is called a zero-sum sequence if $\sigma (S) = 0$ and it is called zero-sum free if $0 \notin \Sigma (S)$. 
A non-trvial zero-sum sequence  is called a minimal zero-sum sequence if it has no proper non-trivial subsequence that is a zero-sum sequence. 
Equivalently, $S = g_1 \dots g_{\ell}$ is a zero-sum sequence if $\sum_{i= 1}^{\ell}g_i = 0$, while it is zero-sum free if $\sum_{i \in I} g_i \neq 0$ for each $\emptyset \neq I \subseteq [1,\ell]$ (note that $I = [1, \ell]$ is possible). Furthermore, it is a  minimal zero-sum sequence if it is non-trivial with $\sum_{i=1}^{\ell}g_i = 0$, yet $\sum_{i\in I} g_i\neq 0$ for each proper subset $\emptyset \neq I \subsetneq [1,\ell]$.

We denote by $\mathcal{B}(G)$ the set of all zero-sum sequences over $G$, by $\mathcal{A}(G)$ the set of all minimal zero-sum sequences over $G$ and by $\mathcal A^{\ast}(G)$ the set of all non-trivial zero-sum free sequences over $G$ (note that the trivial sequence  is also zero-sum free according to the definition given above).

While it is not central for our current investigation, we mention in passing that  $\mathcal{B}(G)$  is a submonoid of $\mathcal{F}(G)$; this submoind is atomic and its irreducible elements are precisely the elements of $\mathcal{A}(G)$.  

The central objects for the current paper are the following two sets. 
The set of cross numbers of all minimal zero-sum sequences over $G$, denoted
\[
\mathsf W(G)=\{\mathsf k(S) \mid S \in \mathcal{A}(G)\}
\]
has been investigated, e.g., in \cite{B-C-M-P04, Ch-Ge96, Ge-Sc97}. 
We also study the related set  
\[
\mathsf w(G)=\{\mathsf k(S) \mid S \in \mathcal{A}^{\ast}(G)\}
\]
the set of cross numbers of all non-trivial zero-sum free sequences over $G$; 
sometimes it is technically advantageous to consider $\mathsf{w}^{\circ}(G)=  \mathsf{w}(G) \cup\{0\}$, 
which would correspond  to considering the trivial sequence as well. 

As mentioned in the introduction the large cross number of $G$ is defined by
\[
\mathsf K(G)= \max \{\mathsf k(S) \mid S \in \mathcal{A}(G)\}
\]
and the small cross number of $G$, for $|G| \neq 1$,  by
\[
\mathsf k(G)= \max \{\mathsf k(S) \mid S \in \mathcal{A}^{\ast}(G)\}.
\]
In other words, $\mathsf{K}(G) = \max \mathsf{W}(G) $ and $\mathsf{k}(G) = \max \mathsf{w}(G)$.
If $|G|= 1$, then  $\mathsf{K}(G)= 1$ and we set $\mathsf{k}(G)=0$.

Let $G=C_{q_1} \oplus \dots \oplus C_{q_s}$ be a direct sum decomposition of $G$ into cyclic groups of prime power order. Set
\[
\mathsf k^{\ast}(G) = \sum_{i=1}^s \frac{q_i-1}{q_i} \quad and \quad 	\mathsf K^{\ast}(G)=\frac{1}{\exp(G)}+\mathsf k^{\ast}(G) \, .	
\]

Suppose $\{e_1, \dots, e_s\}$ is an independent generating set of $G$ with $\ord(e_i)=q_i$ for each $i \in [1, s]$, then 
$T=e_1^{q_1-1} \dots e_s^{q_s-1}$ is zero-sum free and $S= T(e_1+\dots +e_s) $ is a minimal zero-sum sequence.  
This shows that $\mathsf{K}^{\ast}(G) \le \mathsf{K}(G)$ and $\mathsf{k}^{\ast}(G) \le \mathsf{k}(G)$. In principle one could use any set of independent elements, but using a set with elements of prime power order yields the best bound. 

Given such a lower bound, the question arises if equality holds. Equality holds in particular for $p$-groups (see \cite[Proposition 5.1.8 and Theorem 5.5.9]{Ge-HK06}) and for some other special cases (see e.g. \cite{Ge-Sc94,He14a,Ki15a}); we invoke some results in later sections. No example is known where equality does not hold. Krause and Zahlten conjectured in \cite[Page 688]{Kr-Za91} that the equality $\mathsf K(G)=\mathsf K^{\ast}(G)$ holds for all cyclic groups $G$, but even this remains open.  
 
The two constants $\mathsf{k}(G)$ and $\mathsf{K}(G)$ are closely related. It is easy to see that $\frac{1}{\exp(G)}+ \mathsf{k}(G) \le \mathsf{K}(G)$, yet it is not known if equality always holds. However,  in case $\mathsf{K}(G) = \mathsf{K}^{\ast}(G)$, we have  
\[
\frac{1}{\exp(G)}+\mathsf k^{\ast}(G)\le \frac{1}{\exp(G)}+\mathsf k(G) \leq \mathsf K(G)=\mathsf K^{\ast}(G)= \frac{1}{\exp(G)}+\mathsf k^{\ast}(G)
\]
and thus $\mathsf{k}(G) = \mathsf{k}^{\ast}(G)$ and also $\frac{1}{\exp(G)}+ \mathsf{k}(G) =\mathsf{K}(G)$.

Since we use them sometimes in the current paper, we recall a few results for the Davenport constant. The equality $\mathsf D(G)=|G|$ for finite cyclic groups is known and easy to see.  Moreover for $p$-groups  $G=C_{q_1} \oplus \dots \oplus C_{q_s}$ one has  $\mathsf{D}(G) = 1+ \sum_{i=1}^s (q_i-1)$. See for instance \cite[Chapter  5]{Ge-HK06} for these and further results.

We end this section with some results on $\mathsf{W}(G)$ and $\mathsf{w}(G)$ that we use frequently. 

For every finite abelian group, with $|G| \neq 1$, one has $\mathsf{W}(G) \subseteq \frac{1}{\exp(G)} [2, \exp(G) \mathsf{K}(G)]$ and $\mathsf{w}(G) \subseteq \frac{1}{\exp(G)} [1, \exp(G) \mathsf{k}(G)]$. If $|G|$ is even and $\exp(G)= 2^k m $ with an odd $m$ and $G$ does not contain a subgroup of the form $C_{2^k}^2$, then $\mathsf{W}(G) \subseteq \frac{2}{\exp(G)} [1, \frac{\exp(G)\mathsf{K}(G)}{2}]$, see \cite[Lemma 1]{Ch-Ge96}.

In the other direction for every finite abelian group with $|G| $ odd and  $|G|\neq 1$, one has $\frac{1}{\exp(G)}[2, \exp (G)] \subseteq \mathsf{W}(G)$. 
If $|G|$ is even, then in general one only has  $\frac{2}{\exp(G)}[1, \frac{\exp (G)}{2}] \subseteq \mathsf{W}(G)$; however, if $\exp(G)= 2^k m $ with an odd $m$ and $G$  contains a subgroup of the form $C_{2^k}^2$, then we have  $\frac{1}{\exp(G)}[2, \exp (G)] \subseteq \mathsf{W}(G)$ as in the case of groups of odd order, see \cite[Theorem 2]{Ch-Ge96}.

\section{Results on $\mathsf{w}(G)$} 

In the current section, we obtain some results on the structure of the set $\mathsf{w}(G)$. More specifically, we show in two different ways that the set is in some sense close to an arithmetic progression, and that deviations can only occur for values close to the maximum $\mathsf{k}(G)$. In later sections, we discuss that while for $G$ a $p$-group and in a few other cases, the set is indeed an arithmetic progression, this is not always the case. 

First, we establish and recall a few simple lemmas. It is easy to see that $\mathsf{w}(G)$ contains all small elements; the result for $\mathsf{W}(G)$ is recalled at the end of the preliminaries.  

\begin{lemma} \label{w(k<=1)}
Let $G$ be a finite abelian group. Then
\[ \frac{1}{\exp(G)}[1, \exp(G)-1] \subseteq \mathsf w(G). \]
\end{lemma}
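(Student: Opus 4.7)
The statement is elementary, and the plan is simply to exhibit, for each target value, an explicit zero-sum free sequence realizing it.

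First I would dispose of the trivial case $|G|=1$: here $\exp(G)=1$ so the interval $[1,\exp(G)-1]=[1,0]$ is empty and the inclusion holds vacuously.

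Assume then $|G|\ge 2$, set $n=\exp(G)\ge 2$, and choose an element $g\in G$ of order $n$ (which exists by the definition of the exponent). For each integer $k\in[1,n-1]$, consider the sequence
\[ S_k = g^k \in \mathcal{F}(G). \]
This sequence is non-trivial since $k\ge 1$. To verify that $S_k$ is zero-sum free, observe that the non-trivial subsequences of $S_k$ are exactly $g^j$ for $j\in[1,k]$, with sum $jg$. Since $\ord(g)=n$ and $1\le j\le k\le n-1<n$, none of these sums vanish. Hence $S_k\in\mathcal A^{\ast}(G)$. Finally
\[ \mathsf k(S_k) = \frac{k}{\ord(g)} = \frac{k}{\exp(G)}, \]
so $S_k$ witnesses that $k/\exp(G)\in\mathsf w(G)$. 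As $k$ ranges over $[1,n-1]$, this yields the claimed inclusion.

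There is no real obstacle here; the only thing to notice is that one must pick an element of maximal order, not just any element, since otherwise the denominators in the cross number would not give access to every multiple of $1/\exp(G)$ below $1$.
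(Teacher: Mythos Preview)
Your proof is correct and follows essentially the same approach as the paper: pick an element $g$ of order $\exp(G)$ and observe that $g^k$ is zero-sum free with cross number $k/\exp(G)$ for each $k\in[1,\exp(G)-1]$. Your version is slightly more detailed (explicitly handling $|G|=1$ and spelling out why $g^k$ is zero-sum free), but the argument is the same.
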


\begin{proof}
We need to show that for each $j \in [1, \exp(G)-1]$, there is some $S \in \mathcal{A}^{\ast}(G)$ with $\mathsf k(S)=\frac{j}{\exp(G)}$.  
Assume $g \in G$ with $\ord(g)=\exp(G)$. Then $S_j=g^j$ is a zero-sum free sequence for each $j \in [1, \exp(G)-1]$ and $\mathsf k(S_j)=\frac{j}{\exp(G)}$.
\end{proof}

While this result is basic, it is indeed sharp in some cases, namely when $G$ is cyclic group of prime power order as in this case, and in this case only, we have $\mathsf{k}(G)= \frac{\exp(G)-1}{\exp(G)}$. 
The following lemma shows that this is indeed the only case.  Later, we establish results that yield the existence of  larger arithmetic progressions in $\mathsf{w}(G)$ in case the group has a large rank.

\begin{lemma} \label{3.1}
Let $G$ be a finite abelian group and let $H \subseteq G$ be a subgroup.
Then $\mathsf w (H) \subseteq \mathsf w (G)$, and equality holds if and only if $H = G$.
\end{lemma}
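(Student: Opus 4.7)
The plan is to split the claim into the inclusion and the characterization of equality, both of which should be rather straightforward.

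For the inclusion $\mathsf{w}(H) \subseteq \mathsf{w}(G)$, I would simply note that any element $S \in \mathcal{A}^{\ast}(H)$ is also an element of $\mathcal{A}^{\ast}(G)$: being zero-sum free is a condition on subsums, which does not depend on whether we view $S$ as a sequence over $H$ or over the larger group $G$. Since the order of a group element is the same when computed in $H$ or in $G$, the cross number is preserved, giving $\mathsf{k}(S) \in \mathsf{w}(G)$.

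For the equality statement, the direction $H = G \Rightarrow \mathsf{w}(H) = \mathsf{w}(G)$ is trivial. Conversely, suppose $H \subsetneq G$ and fix some $g \in G \setminus H$; in particular $g \neq 0$, so $\ord(g) \geq 2$. I would then distinguish two cases. If $|H| = 1$, then $\mathcal{A}^{\ast}(H) = \emptyset$ so $\mathsf{w}(H) = \emptyset$, whereas $g \in \mathcal{A}^{\ast}(G)$ shows $\mathsf{w}(G) \neq \emptyset$. If $|H| \geq 2$, choose a sequence $T \in \mathcal{A}^{\ast}(H)$ with $\mathsf{k}(T) = \mathsf{k}(H)$ and consider the sequence $gT$ over $G$. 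I claim $gT$ is zero-sum free: any nontrivial subsum that avoids $g$ is a nontrivial subsum of $T$ and hence nonzero since $T$ is zero-sum free; any subsum containing $g$ lies in $g + H$, which does not contain $0$ as $g \notin H$. Thus $\mathsf{k}(gT) = \tfrac{1}{\ord(g)} + \mathsf{k}(H) > \mathsf{k}(H) = \max \mathsf{w}(H)$, so $\mathsf{k}(gT) \in \mathsf{w}(G) \setminus \mathsf{w}(H)$.

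There is no real obstacle here; the only mild subtlety is remembering that $\mathsf{w}(H)$ may be empty when $H$ is trivial, so that the ``extend by $g$'' argument needs the small sanity check above. The rest is bookkeeping with the definition of zero-sum freeness and the observation that $\ord(g)$ is intrinsic to the element.
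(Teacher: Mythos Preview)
Your proof is correct. The inclusion and the trivial direction of the equivalence match the paper exactly. For the nontrivial direction, however, you take a different and more self-contained route than the paper. The paper invokes the inequality
\[
\mathsf{k}(G) \ge \mathsf{k}(H) + \frac{\mathsf{k}(G/H)}{\exp(H)}
\]
from \cite[Proposition~5.1.11]{Ge-HK06} and deduces from $\mathsf{k}(H)=\mathsf{k}(G)$ that $\mathsf{k}(G/H)=0$, hence $H=G$. Your argument instead exhibits an explicit zero-sum free sequence $gT$ with cross number strictly exceeding $\mathsf{k}(H)$, using only the coset observation $g+H \not\ni 0$. This avoids the external reference and is arguably cleaner for the purpose at hand; the paper's approach, on the other hand, gives a quantitative lower bound on the gap $\mathsf{k}(G)-\mathsf{k}(H)$ that your argument does not. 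Your handling of the degenerate case $|H|=1$ (where $\mathsf{w}(H)=\emptyset$) is a nice touch that the paper glosses over.
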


\begin{proof}
It is immediate from the definion that  $\mathcal{A}^{\ast}(H) \subseteq \mathcal{A}^{\ast}(G)$ and thus $\mathsf w(H) \subseteq \mathsf w(G)$ by definition. Cearly, $H=G$ yields $\mathsf w (H) = \mathsf w (G)$ and it remains to show the converse. Note by \cite[Proposition 5.1.11]{Ge-HK06} that
	\[
	\mathsf k(G)\geq \mathsf k (H) + \frac{\mathsf k (G/H)}{\exp (H)} \,.
	\]
Since $\mathsf w(H)=\mathsf w(G)$ implies $\mathsf k(H)=\mathsf k(G)$, it means $\mathsf k(G/H)=0$, and hence $|G/H|=1$ (which is same as $H=G$).
\end{proof}

The following lemma is a key tool for the current section.
\begin{lemma} \label{3.3}
Let $G_1$ and $G_2$ be non-trivial finite abelian groups. Then
	\[
	\mathsf w^{\circ} (G_1) + \mathsf w^{\circ} (G_2)  \subseteq \mathsf w^{\circ} (G_1 \oplus G_2)  
	\text{ and }
	\mathsf w (G_1) + \mathsf w^{\circ} (G_2)  \subseteq \mathsf w (G_1 \oplus G_2) \,.
	\]
\end{lemma}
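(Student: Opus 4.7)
The plan is to prove both inclusions simultaneously by the natural embedding trick: given representative sequences $S_1$ over $G_1$ and $S_2$ over $G_2$, I concatenate their images under the injections $\iota_1\colon G_1 \hookrightarrow G_1\oplus G_2$, $g_1 \mapsto (g_1,0)$ and $\iota_2\colon G_2 \hookrightarrow G_1\oplus G_2$, $g_2 \mapsto (0,g_2)$, to obtain a sequence $T = \iota_1(S_1)\cdot \iota_2(S_2) \in \mathcal F(G_1\oplus G_2)$.

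The two routine verifications are then: \textbf{(i)} the cross number is additive, since $\ord((g_1,0)) = \ord(g_1)$ in $G_1\oplus G_2$ and similarly for $\iota_2$, so $\mathsf k(T) = \mathsf k(S_1) + \mathsf k(S_2)$; and \textbf{(ii)} $T$ is zero-sum free (or trivial). For (ii), any subsequence of $T$ factors uniquely as $\iota_1(T_1)\cdot \iota_2(T_2)$ with $T_i \mid S_i$, and its sum is $(\sigma(T_1),\sigma(T_2))$. If this sum is $0$, then both coordinates vanish; since each $S_i$ is zero-sum free (or trivial), this forces $T_i = 1$ for $i=1,2$, so the subsequence is trivial.

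To deduce the first inclusion $\mathsf w^\circ(G_1)+\mathsf w^\circ(G_2)\subseteq \mathsf w^\circ(G_1\oplus G_2)$, pick $\alpha_i \in \mathsf w^\circ(G_i)$, represent each as $\mathsf k(S_i)$ for some $S_i \in \mathcal A^\ast(G_i)\cup\{1\}$, and build $T$. If both $S_i$ are trivial then $T=1$ and the sum is $0\in \mathsf w^\circ(G_1\oplus G_2)$; otherwise $T$ is a non-trivial zero-sum free sequence by (ii), so $\alpha_1+\alpha_2 = \mathsf k(T) \in \mathsf w(G_1\oplus G_2)\subseteq \mathsf w^\circ(G_1\oplus G_2)$. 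For the second inclusion, start from $\alpha_1 \in \mathsf w(G_1)$, forcing $S_1$ non-trivial, whence $T$ is non-trivial and $\mathsf k(T) \in \mathsf w(G_1\oplus G_2)$.

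No step here is really an obstacle — the only thing to be careful about is the bookkeeping around the trivial sequence, which is precisely why the statement uses $\mathsf w^\circ$ in the second slot (so $S_2$ is allowed to be trivial) and $\mathsf w$ in the first slot of the stronger inclusion (to guarantee $T$ is non-trivial). The fact that the orders of $\iota_i(g)$ and $g$ coincide, underlying both the additivity of the cross number and the disjointness of the two coordinate projections used in (ii), is the one place where one should explicitly note that the embeddings preserve orders.
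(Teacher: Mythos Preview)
Your proof is correct and takes essentially the same approach as the paper: concatenate zero-sum free sequences over the two summands inside $G_1\oplus G_2$ and observe that cross numbers add and the result remains zero-sum free. The only cosmetic difference is that the paper disposes of the case where one $\alpha_i=0$ by invoking Lemma~\ref{3.1} (so that it may assume both $q_i>0$), whereas you handle it directly by allowing the corresponding $S_i$ to be trivial; your treatment is slightly more self-contained.
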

\begin{proof}
For $i \in [1, 2]$, let $q_i \in \mathsf w^{\circ} (G_i) $. Since $\mathsf w^{\circ} (G_i) \subseteq \mathsf w^{\circ} (G_1 \oplus G_2)$ by Lemma \ref{3.1}, we may assume that $q_1 > 0 $ and $q_2 > 0$. Then there are zero-sum free sequences $S_i \in \mathcal F (G_i)$ with $\mathsf k (S_i)=q_i$. Since $S_1S_2 \in \mathcal F (G_1 \oplus G_2)$ is zero-sum free, we obtain that
\[
	q_1+q_2 = \mathsf k (S_1)+\mathsf k (S_2) = \mathsf k (S_1S_2) \in \mathsf w^{\circ} (G_1 \oplus G_2) \,. 
\]
The second claim is obvious from the first, since $0$ is not contained in the left-hand set.
\end{proof}

We now combine these results with a result from additive combinatorics to show that in certain cases  $ \mathsf w (G)$ is close to an arithmetic progression.

\begin{theorem} \label{3.4}
Let $G$ be a finite abelian group with $\exp (G)=n \ge 2$. 
\begin{enumerate}
\item If $G = G_1 \oplus \ldots \oplus G_s$, where $s \in \N$, $\exp (G_i)=n$ for all $i \in [1,s]$, then
		\[
		\frac{1}{n} [1, (n-1)s] \subseteq \mathsf w (G) \,.
		\]
\item There exist constants $c, s^{\ast} \in \N$ such that, for all $s \ge s^{\ast}$,
\[
\frac{1}{\exp(G)}[1, s \exp (G) \mathsf k (G) - c] \subseteq   \mathsf w (G^s) \,.
\]		
In particular, if $\mathsf k (G^s) = s \mathsf k (G)$, then $\mathsf w (G^s)$ is an arithmetic progression, apart from a globally bounded upper part.
\end{enumerate}
\end{theorem}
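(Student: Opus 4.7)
The plan is to apply Lemmas \ref{w(k<=1)} and \ref{3.3} iteratively, reducing both parts to elementary sumset computations in $\mathbb{Z}$.

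For part (1), I would apply the second assertion of Lemma \ref{3.3} inductively to obtain
\[
\mathsf w(G_1) + \mathsf w^{\circ}(G_2) + \cdots + \mathsf w^{\circ}(G_s) \subseteq \mathsf w(G).
\]
By Lemma \ref{w(k<=1)}, each $G_i$ (having exponent $n$) yields $\mathsf w(G_i) \supseteq \tfrac{1}{n}[1, n-1]$ and $\mathsf w^{\circ}(G_i) \supseteq \tfrac{1}{n}[0, n-1]$. Since sumsets of consecutive integer intervals remain consecutive intervals, the left-hand side already contains $\tfrac{1}{n}[1, s(n-1)]$, which is the claim.

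For part (2), the same iteration applied to $s$ copies of $G$ gives that $\mathsf w(G^s)$ contains the sumset of one copy of $\mathsf w(G)$ and $s-1$ copies of $\mathsf w^{\circ}(G)$; since all values involved are non-negative rationals, this sumset equals the $s$-fold sumset of $\mathsf w^{\circ}(G)$ with $0$ removed. Clearing denominators by setting $A = \exp(G)\,\mathsf w^{\circ}(G) \subseteq \mathbb{N}_0$, the set $A$ contains the initial interval $[0, \exp(G) - 1]$ by Lemma \ref{w(k<=1)} and has maximum $M := \exp(G)\,\mathsf k(G)$. What remains is a standard interval-union argument: for each $t \in [0, s]$, taking $t$ summands equal to $M$ and $s-t$ summands in $[0, \exp(G)-1] \subseteq A$ shows that the $s$-fold sumset of $A$ contains $[\,tM,\ tM + (s-t)(\exp(G)-1)\,]$. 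Consecutive intervals (indexed by $t$ and $t+1$) overlap as soon as $s - t \ge (M-1)/(\exp(G)-1)$, which depends only on $G$. Unioning over all such $t$ yields one contiguous interval from $0$ up to $sM - c$, where $c = \lceil (M-1)/(\exp(G)-1) \rceil \cdot (M - \exp(G) + 1)$. Dividing by $\exp(G)$ and deleting $0$ gives the claimed inclusion for all $s \ge s^{\ast} := \lceil (M-1)/(\exp(G)-1) \rceil$.

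The ``in particular'' clause follows at once: under the hypothesis $\mathsf k(G^s) = s\,\mathsf k(G)$, the maximum of $\mathsf w(G^s)$ is exactly $s\,\mathsf k(G)$, so the interval we produced is missing at most the top $c$ lattice points of $\tfrac{1}{\exp(G)}\mathbb{Z}$, a bounded upper portion independent of $s$. The main obstacle is conceptual rather than technical: one must recognize that iterating Lemma \ref{3.3} reduces the structural question about $\mathsf w(G^s)$ to the classical fact that the iterated sumset of a finite $A \subseteq \mathbb{N}_0$ containing an initial interval $[0,k]$ with $k \ge 1$ fills all but a bounded upper boundary of $[0, s \max A]$, and then to verify carefully that the resulting constant $c$ depends only on $G$ (through $\exp(G)$ and $\mathsf k(G)$) and not on $s$.
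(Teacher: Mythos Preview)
Your proof is correct. Part~1 is essentially identical to the paper's argument.

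For part~2 you take a genuinely different route. The paper sets $A=\exp(G)\,\mathsf w^{\circ}(G)$ and then invokes Nathanson's structure theorem for iterated sumsets \cite[Theorem~1.1]{Na96b}, which says that the $s$-fold sumset of any finite $A\subseteq\N_0$ with $0\in A$ and $\gcd(A)=1$ eventually contains a long middle interval $[c,\,s\max A-c]$; the paper then uses part~1 once more to fill in the bottom segment $[1,c]$. Your argument bypasses Nathanson entirely by exploiting the extra information that $A$ already contains the full initial interval $[0,\exp(G)-1]$: the ``shifted blocks'' $[tM,\,tM+(s-t)(\exp(G)-1)]$ for $t=0,1,\dots$ overlap consecutively and fuse into a single interval. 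This is more elementary, fully self-contained, and yields explicit constants $c=\lceil (M-1)/(\exp(G)-1)\rceil\cdot(M-\exp(G)+1)$ and $s^{\ast}=\lceil (M-1)/(\exp(G)-1)\rceil$, whereas the paper's constants are those coming from the general sumset theorem. The trade-off is that Nathanson's theorem would still apply even without the initial-interval information (which is why the paper needs the extra appeal to part~1 to cover $[1,c]$), while your argument is tailored to, and fully uses, the presence of $[0,\exp(G)-1]$ inside $A$.
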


 Note that, for every $s \in \N$, we have
\[
\mathsf k^{\ast} (G^s) = s \mathsf k^{\ast} (G) \,.
\]
Since there are no examples with $\mathsf k (G) \ne \mathsf k ^{\ast} (G)$, there are no examples of groups $G$ and integers $s \in \N$,  for which
\[
\mathsf k (G^s) = s \mathsf k (G)
\]
does not hold.

\begin{proof}
1. Since $\frac{1}{n} [1, n-1] \subseteq \mathsf w (G_i)$ for all $i \in [1,s]$, we obtain that
	\[
	\begin{aligned}
		\frac{1}{n} [1, (n-1)s] & = s  \big( \{0\} \cup \frac{1}{n} [1, n-1] \big)  \subseteq \mathsf w^{\circ} (G_1) + \ldots + \mathsf w^{\circ} (G_s) \\
		& \subseteq \mathsf w^{\circ} (G) \,,
	\end{aligned}
	\]

	2. We set $A = \exp (G) \mathsf w^{\circ} (G) \subseteq \N_0$. By \cite[Theorem 1.1]{Na96b}, there exists integer $c \in \N$ such that the $s$-fold sumset of $A$ has the form
	\[
	 A' \uplus [c, s \max A - c  ] \uplus A'' \quad \subseteq \exp (G) \mathsf w^{\circ} (G^k) \,,
	\]
	with $A' \subseteq [0, c-2]$ and $A'' \subseteq s \max A -c + [1, c]$, for all $s \ge  \max \{ 1, (|A|-2)(\max A - 1)\max A \}$; we set $s^{\ast}$ equal to this value.
	
Now suppose in addition that $(n-1)s^{\ast} \ge c$. Then the first assertion implies that $[1, c] \subseteq \exp(G) \mathsf w^{\circ} (G^s)$. Thus, we obtain that
\[
[0, s \max A - c] \uplus A'' \subseteq \exp (G) \mathsf w^{\circ} (G^s) \,,
\]
and hence the assertion follows.
\end{proof}

There are better estimates for the constant $c$, then the one given in \cite{Na96b}; see for example \cite{Gr-Wa21a, Gr-Sh-Wa23a}, however we do not pursue this route for improvement. Instead, we present our second approach to the problem of determining that $\mathsf{w}(G)$ contains all small elements. 
The approach is similar, in that we use again Lemma \ref{3.3}. However, we do not impose anymore that all the groups have the same exponent. 
This has the advantage of being able to apply the result to any group. The drawback is that the application of Lemma \ref{3.3} is less direct. 
To overcome this issue we need a result on set-addition, which we give in Lemma \ref{addition}. 

The main result we obtain is the following theorem. As a corollary we obtain a complete description of $\mathsf{w}(G)$ for $G$ a $p$-group. 
 
\begin{theorem} \label{4.8}
Let $G=H \oplus C_n$ with  $\exp (G)=n$. Then 
	\[ \frac{1}{n} [1, n-1+ n\mathsf{k}^{\ast}(H)] \subseteq \mathsf w(G) \,.\]
\end{theorem}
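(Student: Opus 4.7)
The plan is to combine Lemma \ref{3.3} for the splitting $G = H \oplus C_n$, giving $\mathsf{w}(C_n) + \mathsf{w}^{\circ}(H) \subseteq \mathsf{w}(G)$, with explicit constructions of zero-sum free sequences in each factor, and then to invoke the forthcoming set-addition Lemma \ref{addition} to identify the resulting set of cross numbers as the required interval. The case of trivial $H$ reduces directly to Lemma \ref{w(k<=1)}, so assume $H$ is non-trivial.

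For the first summand, Lemma \ref{w(k<=1)} gives $\frac{1}{n}[1, n-1] \subseteq \mathsf{w}(C_n)$. For the second, fix a prime-power decomposition $H = C_{q_1} \oplus \dots \oplus C_{q_t}$ and an independent generating set $\{e_1,\dots,e_t\}$ with $\ord(e_i)=q_i$; then the sequences $\prod_{i=1}^{t} e_i^{a_i}$ with $a_i \in [0, q_i-1]$ are zero-sum free in $H$ with cross numbers $\sum_{i=1}^{t} a_i/q_i$. Since $q_i \mid \exp(G) = n$, setting $d_i = n/q_i$ we may rewrite these as $\frac{1}{n}\sum_{i=1}^{t} d_i a_i$, so that $\mathsf{w}^{\circ}(H) \supseteq \frac{1}{n}(A_1 + \dots + A_t)$ with $A_i = \{0, d_i, 2d_i, \dots, (q_i-1)d_i\}$. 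Lemma \ref{3.3} then yields $\mathsf{w}(G) \supseteq \frac{1}{n}\bigl([1, n-1] + A_1 + \dots + A_t\bigr)$.

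It remains to show that $[1, n-1] + A_1 + \dots + A_t = [1,\, n-1 + \sum_{i=1}^{t} d_i(q_i-1)]$, the upper endpoint being $n-1 + n\mathsf{k}^{\ast}(H)$ by the definition of $\mathsf{k}^{\ast}$. This is exactly what Lemma \ref{addition} will supply: adding the arithmetic progression $A_i$ of common difference $d_i$ to an integer interval $I$ with $|I| \geq d_i$ yields the integer interval $[\min I,\, \max I + (q_i-1)d_i]$. Since each $d_i = n/q_i \leq n/2 \leq n-1$, the initial interval $[1, n-1]$ satisfies the hypothesis, and the subsequent iterations only lengthen it. The main (mild) obstacle is therefore the set-addition lemma itself, a classical observation about overlaps of translates of an interval by an arithmetic progression; the rest is bookkeeping to ensure that the lower endpoint stays $1$ and that the upper endpoint matches $n-1 + n\mathsf{k}^{\ast}(H)$ exactly.
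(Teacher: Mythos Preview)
Your proof is correct and follows essentially the same approach as the paper: both combine Lemma~\ref{3.3}, Lemma~\ref{w(k<=1)}, and Lemma~\ref{addition}, decomposing $H$ into its cyclic prime-power factors and iteratively absorbing the arithmetic progressions $\frac{1}{q_i}[0,q_i-1]$ into the interval coming from $C_n$. The only cosmetic difference is that the paper applies Lemma~\ref{3.3} repeatedly to each cyclic factor $C_{q_i}$ (obtaining $\sum_i \mathsf{w}^{\circ}(C_{q_i}) + \mathsf{w}(C_n) \subseteq \mathsf{w}(G)$), while you apply it once to $H \oplus C_n$ and construct the product sequences $\prod_i e_i^{a_i}$ in $H$ directly; these amount to the same thing.
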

 
\begin{corollary}\label{w-p groups}
	Let $G=H \oplus C_n$ with  $\exp (G)=n$ a $p$-group. Then
	\[ \mathsf{w} (G) = \frac{1}{n} [1,  n\mathsf{k}(G)] \,.\]
\end{corollary}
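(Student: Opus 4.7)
The plan is to derive the corollary as a direct combination of Theorem \ref{4.8} with two facts that are already recorded earlier in the paper: the general upper bound $\mathsf{w}(G)\subseteq \frac{1}{\exp(G)}[1,\exp(G)\mathsf{k}(G)]$ (stated at the end of the Preliminaries section) and the equality $\mathsf{k}(G)=\mathsf{k}^{\ast}(G)$ for $p$-groups (noted in Section~\ref{sec_prel}, since for $p$-groups one has $\mathsf{K}(G)=\mathsf{K}^{\ast}(G)$, which forces $\mathsf{k}(G)=\mathsf{k}^{\ast}(G)$). So there is essentially no new combinatorics to do, only an accounting step.

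The first step is the trivial direction: every cross number of a non-trivial zero-sum free sequence over $G$ is a positive multiple of $\frac{1}{\exp(G)}=\frac{1}{n}$ and, by definition, at most $\mathsf{k}(G)$; hence $\mathsf{w}(G)\subseteq \frac{1}{n}[1,n\mathsf{k}(G)]$.

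For the reverse inclusion, I note that $H$, being a subgroup of the $p$-group $G$, is itself a $p$-group, so $\mathsf{k}(H)=\mathsf{k}^{\ast}(H)$. Writing $H\cong C_{q_1}\oplus\cdots\oplus C_{q_{s-1}}$ with prime-power orders dividing $n$, the decomposition $G=H\oplus C_n$ yields
\[
\mathsf{k}^{\ast}(G)=\sum_{i=1}^{s-1}\frac{q_i-1}{q_i}+\frac{n-1}{n}=\mathsf{k}^{\ast}(H)+\frac{n-1}{n},
\]
so $n\mathsf{k}(G)=n\mathsf{k}^{\ast}(G)=n\mathsf{k}^{\ast}(H)+n-1$. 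Applying Theorem~\ref{4.8} then gives $\frac{1}{n}[1,n\mathsf{k}(G)]=\frac{1}{n}[1,n-1+n\mathsf{k}^{\ast}(H)]\subseteq \mathsf{w}(G)$, which together with the first step yields the claimed equality.

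The only point one might worry about is the initial observation that $H$ inherits the $p$-group property; this is immediate from $H\leq G$. No step is really an obstacle here: everything reduces to recognizing that Theorem~\ref{4.8} already supplies the full interval up to $\mathsf{k}(G)$ in the $p$-group case, precisely because for $p$-groups the inequality $\mathsf{k}^{\ast}(H)+\frac{n-1}{n}\le \mathsf{k}(G)$ is an equality.
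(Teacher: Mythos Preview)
Your proof is correct and follows essentially the same approach as the paper's own proof: both derive the corollary from Theorem~\ref{4.8} by noting that for a $p$-group $G=H\oplus C_n$ one has $\mathsf{k}(G)=\mathsf{k}^{\ast}(G)=\mathsf{k}^{\ast}(H)+\frac{n-1}{n}$, so the interval produced by Theorem~\ref{4.8} is exactly $\frac{1}{n}[1,n\mathsf{k}(G)]$. You spell out a few more details (the trivial upper bound, the fact that $H$ is a $p$-group) than the paper's one-line justification, but the substance is identical.
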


The proof of the following lemma is basic and chances are the result is somewhere in the literature, we include it for lack of a suitable reference. 
We recall that $\Delta(A)$ denotes the set of successive distance of $A$, that is for $A  = \{a_1, \dots, a_k\}$ with $a_i < a_{i+1}$ the set is given by $a_{i+1}- a_i$ for $i \in [1,k-1]$. 

\begin{lemma}\label{addition}
Let $A\subseteq \mathbb{Z}$ be a finite set with $|A|\ge 2$ and  $ \max \Delta (A)\le l$.
Then $[0, l-1] + A  = [\min A, \max A + l-1]$.
\end{lemma}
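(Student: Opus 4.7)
The statement is a simple fact about sumsets of an arithmetic-progression-like set with an interval, so the proof should be short and direct. The plan is to prove the two inclusions separately, with the non-trivial content being in the reverse inclusion.

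For the inclusion $[0,l-1] + A \subseteq [\min A, \max A + l - 1]$, I would just note that any element of the left-hand side has the form $j + a$ with $j \in [0,l-1]$ and $a \in A$, and then the bounds on $j$ and $a$ immediately give the bounds on $j+a$. This needs no use of the hypothesis $\max \Delta(A) \leq l$.

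For the reverse inclusion, write $A = \{a_1 < a_2 < \dots < a_k\}$ and take any integer $z \in [\min A, \max A + l - 1] = [a_1, a_k + l - 1]$. The natural move is to let $i$ be the largest index in $[1,k]$ with $a_i \leq z$; such an index exists since $a_1 \leq z$. I would then split on whether $i = k$ or $i < k$. If $i = k$, then $a_k \leq z \leq a_k + l - 1$, so $z - a_k \in [0,l-1]$ and $z = (z-a_k) + a_k \in [0,l-1] + A$. If $i < k$, maximality of $i$ gives $z < a_{i+1}$, so
\[
0 \leq z - a_i < a_{i+1} - a_i \leq \max \Delta(A) \leq l,
\]
which means $z - a_i \in [0, l-1]$ and again $z = (z-a_i) + a_i \in [0,l-1] + A$.

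There is no real obstacle here: the only place the hypothesis on $\max \Delta(A)$ is used is in bounding $z - a_i$ by $l-1$ in the second case, which is precisely what the hypothesis was tailored to give. The role of $|A| \geq 2$ is only to ensure that $\Delta(A)$ is non-empty so that the condition $\max \Delta(A) \leq l$ is meaningful; the proof itself goes through either way once $A$ is non-empty.
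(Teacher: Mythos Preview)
Your proof is correct and follows essentially the same approach as the paper: pick the largest element $a\in A$ with $a\le z$ and bound $z-a$ by $l-1$, splitting on whether $a=\max A$ or not. The only difference is that you also spell out the trivial inclusion $[0,l-1]+A\subseteq[\min A,\max A+l-1]$, which the paper omits.
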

\begin{proof}
Let $n \in  [\min A, \max A + l-1]$ we show that $n \in [0,l-1] + A$. Let $a \in A$ be maximal with $a \le n$, which exists since $\min A \le n$.
We note that $n-a \le l-1$. This is due to the fact that if $a \neq \max A$, then $[a+1,a+l]$ contains an element of $A$ by the condition on $\Delta(A)$, while if $a= \max A$, then it follows from the fact that $n \le \max A + l - 1$.
\end{proof}

In the following result we allow rather arbitrary direct sum decomposition of $G$ into cyclic groups, note though that  the exponent of $G$ is indeed $n$.  
Usually, starting from a given group, the best way to decompose is to impose that each  $q_i$ is a prime power. 

\begin{proposition} \label{w-C_n+C_q's}
	Let $G= \bigoplus_{i=1}^{t}C_{q_i}\oplus C_n $ where each $q_i$ divides $n$. Then
	
	\[  \sum_{i=1}^{t} \frac{1}{q_i}[0, q_i-1] +\mathsf{w}^{\circ}(C_n) \subseteq \mathsf{w}^{\circ} (G) \]
	and
	\[   \sum_{i=1}^{t} \frac{1}{q_i}[0, q_i-1] +\mathsf{w}(C_n) \subseteq \mathsf{w} (G) \]
	
\end{proposition}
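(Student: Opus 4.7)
The plan is to iterate Lemma \ref{3.3} after seeding the induction with the containment $\frac{1}{q_i}[0,q_i-1]\subseteq \mathsf{w}^{\circ}(C_{q_i})$ for each cyclic summand $C_{q_i}$. This latter fact is immediate: Lemma \ref{w(k<=1)} applied to $C_{q_i}$ (whose exponent is $q_i$) gives $\frac{1}{q_i}[1,q_i-1]\subseteq \mathsf{w}(C_{q_i})\subseteq \mathsf{w}^{\circ}(C_{q_i})$, and $0\in \mathsf{w}^{\circ}(C_{q_i})$ holds by definition of $\mathsf{w}^{\circ}$.

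Next I would prove by induction on $t$ that
\[
\sum_{i=1}^t \mathsf{w}^{\circ}(C_{q_i}) \;\subseteq\; \mathsf{w}^{\circ}\Bigl(\bigoplus_{i=1}^t C_{q_i}\Bigr).
\]
The base case $t=1$ is trivial; the inductive step is one application of the first inclusion of Lemma \ref{3.3} with $G_1=\bigoplus_{i=1}^{t-1} C_{q_i}$ and $G_2=C_{q_t}$. Combining this with the seed containment yields
\[
\sum_{i=1}^t \frac{1}{q_i}[0,q_i-1] \;\subseteq\; \mathsf{w}^{\circ}\Bigl(\bigoplus_{i=1}^t C_{q_i}\Bigr).
\]

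For the first inclusion of the proposition, I would then apply the first part of Lemma \ref{3.3} once more, with $G_1 = \bigoplus_{i=1}^t C_{q_i}$ and $G_2 = C_n$, to obtain
\[
\sum_{i=1}^t \frac{1}{q_i}[0,q_i-1] + \mathsf{w}^{\circ}(C_n) \;\subseteq\; \mathsf{w}^{\circ}\Bigl(\bigoplus_{i=1}^t C_{q_i}\Bigr) + \mathsf{w}^{\circ}(C_n) \;\subseteq\; \mathsf{w}^{\circ}(G).
\]
For the second inclusion I would instead invoke the asymmetric version $\mathsf{w}(C_n) + \mathsf{w}^{\circ}(G_1) \subseteq \mathsf{w}(C_n\oplus G_1)$ from Lemma \ref{3.3} with $G_1 = \bigoplus_{i=1}^t C_{q_i}$, which gives the desired containment into $\mathsf{w}(G)$.

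There is no real obstacle here: the argument is pure bookkeeping combining Lemma \ref{w(k<=1)} and Lemma \ref{3.3}. The only point to keep track of is that, for the $\mathsf{w}$-version, one must retain $\mathsf{w}(C_n)$ (not $\mathsf{w}^{\circ}(C_n)$) in the sum and use $\mathsf{w}^{\circ}$ on every $C_{q_i}$-factor, which is exactly what the two asymmetric inclusions of Lemma \ref{3.3} accommodate.
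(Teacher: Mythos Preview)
Your proposal is correct and follows essentially the same approach as the paper: repeated application of Lemma~\ref{3.3} combined with the seed containment $\frac{1}{q_i}[0,q_i-1]\subseteq \mathsf{w}^{\circ}(C_{q_i})$ from Lemma~\ref{w(k<=1)}. The paper compresses your induction into the phrase ``by repeated application of Lemma~\ref{3.3}'' and leaves the second claim implicit, whereas you spell out the asymmetric use of Lemma~\ref{3.3} for the $\mathsf{w}$-version more carefully; but there is no substantive difference.
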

\begin{proof}
	By repeated application of Lemma \ref{3.3} we have
	
	\[  \sum_{i=1}^t \mathsf{w}^{\circ}(C_{q_i}) +\mathsf{w}^{\circ}(C_n) \subseteq \mathsf{w}^{\circ} (G) \,. \] 	
	Now, by Lemma \ref{w(k<=1)} we have  $\mathsf{w}^{\circ}(C_{q_i}) \supseteq  \frac{1}{q_i}[0, q_i-1]$ and the claims follow.
\end{proof}

Combining the result with earlier lemmas we get a main technical result of this section. 

\begin{proposition}\label{w-C_n+C_q's(1)}
	Let $G= \bigoplus_{i=1}^{t}C_{q_i} \oplus C_n$ where each $q_i$  divides $n$. Then
	\[ \frac{1}{n} [0, n-1+ n\sum_{i=1}^{t} \frac{q_i-1}{q_i}] \subseteq \mathsf{w}^{\circ} (G) \]
	and
	\[ \frac{1}{n} [1, n-1+ n\sum_{i=1}^{t} \frac{q_i-1}{q_i}] \subseteq \mathsf{w} (G)\]
\end{proposition}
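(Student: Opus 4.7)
The plan is to combine Proposition \ref{w-C_n+C_q's} with Lemma \ref{w(k<=1)} applied to $C_n$, obtaining the inclusions
\[ \tfrac{1}{n}[0,n-1] + \sum_{i=1}^{t} \tfrac{1}{q_i}[0,q_i-1] \subseteq \mathsf{w}^{\circ}(G) \quad \text{and} \quad \tfrac{1}{n}[1,n-1] + \sum_{i=1}^{t} \tfrac{1}{q_i}[0,q_i-1] \subseteq \mathsf{w}(G), \]
and then to use Lemma \ref{addition} to show these sumsets cover the claimed intervals. After rescaling by $n$, the combinatorial task reduces to showing $[0,n-1] + A = [0, \max A + n - 1]$ and $[1,n-1] + A = [1, \max A + n - 1]$, where I set $A = \sum_{i=1}^{t} \tfrac{n}{q_i}[0,q_i-1] \subseteq \mathbb{N}_0$; note that $\min A = 0$ and $\max A = n\sum_{i=1}^{t} \tfrac{q_i-1}{q_i}$, so the endpoints line up with the desired intervals.

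The main step is to estimate $\max \Delta(A)$. My plan is to observe that for any non-maximal $x = \sum_i \tfrac{n}{q_i}a_i \in A$ (with $a_i \in [0,q_i-1]$), at least one coordinate satisfies $a_j < q_j - 1$, so $x + \tfrac{n}{q_j}$ is again in $A$. Thus every non-maximal element of $A$ has a successor within distance $\tfrac{n}{q_j} \le \tfrac{n}{2}$, which gives $\max \Delta(A) \le \tfrac{n}{2} \le n - 1$ for all $n \ge 2$. This one-coordinate increment argument is the only nontrivial piece of the proof, and I expect it to be the main (though small) obstacle.

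With the estimate in hand, Lemma \ref{addition} with $l = n$ yields $[0,n-1] + A = [0, \max A + n - 1]$, and dividing by $n$ proves the first inclusion. For the second inclusion, I would apply Lemma \ref{addition} with $l = n-1$ to obtain $[0,n-2] + A = [0, \max A + n - 2]$, then shift the whole sumset by $1$ to get $[1,n-1] + A = [1, \max A + n - 1]$, and again divide by $n$. The degenerate case $t = 0$ (where $|A| = 1$ and Lemma \ref{addition} does not apply) reduces immediately to Lemma \ref{w(k<=1)}, since then $G = C_n$ and the statement of the proposition is exactly that lemma (together with adding $0$ for the $\mathsf{w}^{\circ}$ version).
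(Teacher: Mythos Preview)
Your proof is correct and uses the same ingredients as the paper (Proposition~\ref{w-C_n+C_q's}, Lemma~\ref{w(k<=1)}, and Lemma~\ref{addition}); the only organizational difference is that the paper applies Lemma~\ref{addition} inductively, absorbing one summand $\frac{n}{q_i}[0,q_i-1]$ at a time into an already-formed interval, whereas you bound $\max\Delta(A)$ for the full sumset $A$ directly via the coordinate-increment argument and then apply Lemma~\ref{addition} once. Note also that your separate treatment of the $\mathsf{w}(G)$ inclusion via $l=n-1$ is unnecessary, since the second claim follows immediately from the first by observing that $\mathsf{w}(G)=\mathsf{w}^{\circ}(G)\setminus\{0\}$.
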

\begin{proof}
	We apply the Proposition \ref{w-C_n+C_q's} to obtain $ \sum_{i=1}^{t} \frac{1}{q_i}[0, q_i-1] +\mathsf{w}^{\circ}(C_n) \subseteq \mathsf{w}^{\circ} (G)$.
	Now by Lemma \ref{w(k<=1)}, we have $\frac{1}{n}[0,n-1] \subseteq \mathsf{w}^{\circ}(C_n)$.
	By Lemma \ref{addition}, we have $\frac{n}{q_i}[0, q_i-1] +[0,n-1]= [0, n-1 + \frac{n(q_i-1)}{q_i} ]$.
	Then the claim follows by a simple induction.
\end{proof}

We now conclude this section with the proof of the main results. 

\begin{proof}[Proof of Theorem \ref{4.8} and Corollary \ref{w-p groups}]
The theorem is a  direct consequence of Proposition \ref{w-C_n+C_q's(1)}; it suffices to impose that $q_i$ is a prime power for each $q_i$ and to recall the definition of $\mathsf{k}^{\ast}(H)$.  To get the corollary it suffices to recall that $\mathsf{k}(G) = \frac{n-1}{n} + \mathsf{k}^{\ast}(H)$ for $G$ a $p$-group.
\end{proof}

\section{Results on $\mathsf{W}(G)$}

The purpose of this section is to obtain results along the lines of the ones for $\mathsf{w}(G)$ presented in the preceding section. We recall the following result due to Chapman and Geroldinger \cite[Theorem 4]{Ch-Ge96} that gives a complete description of $\mathsf{W}(G)$ for $p$-groups, like we established it in Corollary \ref{w-p groups} for $\mathsf{w}(G)$.  It is interesting to observe the difference for $2$-groups.

\begin{theorem} \label{3.2}
Let $G = C_{n_1} \oplus \dots \oplus C_{n_r}$ be a finite abelian $p$-group with $1=n_0  < n_1 \mid \dots \mid n_r$.
\begin{enumerate}
\item Suppose that $p$ is either odd or that $p=2$ with $n_{r-1}=n_r$. Then
\[
\mathsf W (G) = \frac{1}{\exp(G)} [2, \exp (G)\mathsf K (G)] .
\]
\item Suppose $p=2$ and $n_{r-1} < n_r$. Then
\[
\mathsf W (G) = \frac{2}{\exp(G)}[1, \frac{\exp(G)}{2} \mathsf K(G)].
\]
\end{enumerate}
\end{theorem}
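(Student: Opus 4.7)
The plan is to handle upper and lower containments separately. The upper inclusions come for free from the preliminaries: in case~(1) the general bound $\mathsf{W}(G) \subseteq \tfrac{1}{\exp(G)}[2,\exp(G)\mathsf{K}(G)]$ suffices, while in case~(2) the hypotheses $p=2$ and $n_{r-1} < n_r$ prevent $G$ from containing a subgroup isomorphic to $C_{2^k}^2$ with $2^k = \exp(G)$, so the refined bound $\mathsf{W}(G) \subseteq \tfrac{2}{\exp(G)}[1,\tfrac{1}{2}\exp(G)\mathsf{K}(G)]$ from the preliminaries applies. The ``lower part'' of the required inclusion is also immediate from the preliminaries: $\tfrac{1}{\exp(G)}[2,\exp(G)] \subseteq \mathsf{W}(G)$ in case~(1) (using either $|G|$ odd or the subgroup $C_{\exp(G)}^2$ available when $n_{r-1}=n_r$), and $\tfrac{2}{\exp(G)}[1,\tfrac{1}{2}\exp(G)] \subseteq \mathsf{W}(G)$ in case~(2). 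What remains is to realise the ``upper'' values, i.e.\ those strictly exceeding $1$.

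For this I would fix an independent generating set $\{e_1,\dots,e_s\}$ with $\ord(e_i)=q_i$ prime powers and $q_s=\exp(G)$, and work with modifications of the extremal sequence $S_0 = e_1^{q_1-1}\cdots e_s^{q_s-1}(e_1+\cdots+e_s)$, of cross number $\mathsf{K}(G)=\mathsf{K}^{\ast}(G)$. The primary one-parameter family is
\[
  S_j \;=\; e_1^{q_1-1}\cdots e_{s-1}^{q_{s-1}-1}\,e_s^{q_s-j}\,(e_1+\cdots+e_{s-1}+je_s), \qquad j \in [1,q_s],
\]
which a direct coordinate check in the basis $(e_1,\dots,e_s)$ shows lies in $\mathcal{A}(G)$, with cross number $\mathsf{k}(S_j) = \mathsf{K}(G) - j/\exp(G) + 1/\ord(e_1+\cdots+e_{s-1}+je_s)$. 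Whenever $\gcd(j,\exp(G))=1$ this order equals $\exp(G)$ and the cross number collapses to $\mathsf{K}(G)-(j-1)/\exp(G)$. Symmetric variants in which the distinguished role of $e_s$ is played by another $e_i$, together with two-coordinate modifications of the shape $e_1^{q_1-1}\cdots e_{s-2}^{q_{s-2}-1} e_{s-1}^{q_{s-1}-2} e_s^{q_s-1}(e_1+\cdots+2e_{s-1}+e_s)$, produce further minimal zero-sum sequences that collectively fill the gaps left by the coprimality constraint. These can be organised inductively in $s$ by means of a ``pointed'' analogue of Lemma~\ref{3.3}: if $T_i \in \mathcal{A}^{\ast}(G_i)$ satisfies $\sigma(T_i) = -g_i$, then $T_1 T_2\,(g_1,g_2) \in \mathcal{A}(G_1\oplus G_2)$ with cross number $\mathsf{k}(T_1)+\mathsf{k}(T_2)+1/\ord((g_1,g_2))$.

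The principal obstacle is the bookkeeping needed to guarantee complete coverage of the interval, and the right organisation differs between the subcases. For $p$ odd in case~(1), the primary family alone hits every position $(j-1)/\exp(G)$ with $j\not\equiv 0\pmod p$, and the omitted multiples of $p$ are reached by the symmetric and two-coordinate variants. In case~(1) with $p=2$ and $n_{r-1}=n_r$, the primary family only produces step $2/\exp(G)$, but here the two-coordinate modification becomes genuinely useful because $q_{s-1}=\exp(G)$, supplying the missing step $1/\exp(G)$ via the computation that replacing one $e_{s-1}$ and the summand $(e_1+\cdots+e_s)$ by the new summand $(e_1+\cdots+2e_{s-1}+e_s)$ changes the cross number by exactly $-1/\exp(G)$. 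In case~(2) the coprimality constraint forces $j$ odd, which is precisely the desired behaviour: the parity phenomenon already reflected in the preliminaries (the number of elements of order $\exp(G)=2^k$ in any $S\in\mathcal{A}(G)$ must be even, because their $e_s$-coordinates are the odd residues of $\Z/2^k\Z$ and the total $e_s$-coordinate is $\equiv 0\pmod{2^k}$) shows that no finer step is achievable, so the primary family together with the ``lower half'' from the preliminaries exhausts the full arithmetic progression.
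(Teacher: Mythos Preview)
The paper does not prove this theorem at all; it is stated with a citation to \cite[Theorem~4]{Ch-Ge96} and then used as input for the rest of Section~4. So there is no in-paper argument to compare your proposal against, and you are in effect attempting to supply what the authors chose to quote.

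Your outline has the right shape. The upper containments and the ``lower half'' up to $1$ genuinely follow from the facts recalled at the end of Section~\ref{sec_prel}; the pointed construction $T_1T_2(g_1,g_2)\in\mathcal{A}(G_1\oplus G_2)$ is valid; and your reading of case~(2) is correct (odd $j$ gives step $2/\exp(G)$, matching the parity obstruction). The one real gap is the inductive part. Your primary family $S_j$ only sweeps an interval of width roughly $1$ below $\mathsf{K}(G)$, so once $\mathsf{K}(G)>2$ (already for $C_p^3$) there is a middle range that neither the preliminaries nor the $S_j$ reach. You defer this to ``organised inductively in $s$'' via the pointed lemma, but that lemma requires simultaneous control of $\mathsf{k}(T_i)$ \emph{and} $\sigma(T_i)$, which is strictly stronger than the inductive hypothesis ``$\mathsf{W}(G_1)$ is the full progression''; you have not indicated how to obtain a $T_1$ of prescribed cross number whose sum $-g_1$ has a prescribed (maximal) order so that $\ord((g_1,g_2))=\exp(G)$. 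This is fixable---indeed the paper itself later proves precisely such a strengthening in Lemma~\ref{val lemma} and exploits it in Proposition~\ref{W-C_n+C_q's}---but as written the step is asserted rather than carried out. Similarly, in case~(1) with $p=2$ and $n_{r-1}=n_r$, a single two-coordinate modification only produces one new value; you need to say that it can be superimposed on each $S_j$ (and iterated in the $e_{s-1}$-exponent) to generate the full family of odd offsets.
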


The goal of the remainder of the section is to obtain a result like Proposition \ref{w-C_n+C_q's(1)} for $\mathsf{W}(G)$ instead of $\mathsf{w}(G)$. 
The overall strategy of the proof is the same but the problem is more subtle as we do not have a result like Lemma \ref{3.3}.  Moreover, we impose right away that each $q_i$ is a prime power, which anyway is the most relevant case. 
\begin{proposition} \label{W-C_n+C_q's}  
Let $G= \bigoplus_{i=1}^{t}C_{q_i} \oplus C_n$ where each $q_i$  is a prime power that divides $n$. Then	
	\[  \sum_{i=1}^{t} \frac{1}{q_i}[0, q_i-1] +\mathsf W(C_n) \subseteq \mathsf W(G)	\]
\end{proposition}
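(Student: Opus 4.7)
The approach mirrors that of Proposition \ref{w-C_n+C_q's(1)}, but the lack of an analogue of Lemma \ref{3.3} forces a more hands-on construction. Fix $v \in \mathsf{W}(C_n)$ realized by $S = g_1 \cdots g_\ell \in \mathcal{A}(C_n)$ and $(a_i) \in \prod_{i=1}^{t} [0, q_i - 1]$; let $e_i \in G$ denote a generator of the direct summand $C_{q_i}$. The plan is to produce $\tilde{S} \in \mathcal{A}(G)$ with $\mathsf{k}(\tilde{S}) = v + \sum_{i=1}^{t} a_i/q_i$ by enriching each $g_j$ with a correction $y_j \in \bigoplus_{i=1}^{t} C_{q_i}$ whose order divides $\ord(g_j)$.

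The key step is to exhibit such $y_j$'s satisfying $\sum_j y_j = -\sum_i a_i e_i$. The subgroup $\{y \in \bigoplus_i C_{q_i} : \ord(y) \mid \ord(g_j)\}$ equals $\bigoplus_i C_{\gcd(q_i, \ord(g_j))}$, so their sum over $j$ equals $\bigoplus_i C_{\gcd(q_i, D)}$ with $D = \lcm_j \ord(g_j)$. Provided $\langle \supp(S) \rangle = C_n$, one has $D = n$, and since each $q_i \mid n$ this is all of $\bigoplus_i C_{q_i}$, so the required decomposition exists. Setting $\tilde{g}_j = g_j + y_j \in G$, one has $\ord(\tilde{g}_j) = \ord(g_j)$ (because $g_j$ and $y_j$ lie in complementary direct summands and $\ord(y_j) \mid \ord(g_j)$), and
\[
\tilde{S} \;=\; \prod_{j=1}^{\ell} \tilde{g}_j \cdot \prod_{i=1}^{t} e_i^{a_i} \;\in\; \mathcal{F}(G)
\]
then satisfies $\sigma(\tilde{S}) = 0$ and $\mathsf{k}(\tilde{S}) = v + \sum_{i} a_i/q_i$. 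For minimality, any zero-sum $T \mid \tilde{S}$ with $J = \{j : \tilde{g}_j \in T\}$ and multiplicities $b_i \in [0, a_i]$ of $e_i$ in $T$ projects to $\sum_{j \in J} g_j = 0$ in $C_n$, forcing $J \in \{\emptyset, [1, \ell]\}$ by minimality of $S$; the $C_{q_i}$-projections, combined with $0 \le b_i \le a_i < q_i$, then pin down $T = 1$ or $T = \tilde{S}$.

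The main obstacle is arranging that $\langle \supp(S) \rangle = C_n$. The plan is a splitting procedure: if some $g \in \supp(S)$ has $\ord(g) = d_g < n$, replace the single copy of $g$ in $S$ by $h^{n/d_g - 1} \cdot (g - (n/d_g - 1) h)$ for a generator $h$ of $C_n$ chosen so that $g - (n/d_g - 1) h$ is also a generator. Preservation of sum and cross number is immediate, since the new block contributes $(n/d_g)/n = 1/d_g$. Existence of such $h$ follows from a residue analysis prime-by-prime over $n$: the only delicate case, $p = 2$, is handled by observing that if $g$ is odd then $v_2(\ord(g)) = v_2(n)$ already, precluding the potential obstruction. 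Minimality of the replacement is maintained by additionally arranging that no $-jh$ or $kh - g$ (for $j, k \in [1, n/d_g - 1]$) appears among the subset sums of $g^{-1}S$; this further avoidance is accommodated within the same CRT framework thanks to the remaining degrees of freedom in $h$, and iterating the procedure produces the required realizer.
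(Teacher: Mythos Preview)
Your overall strategy is essentially the paper's: realize the target cross number by first finding a minimal zero-sum sequence over $C_n$ whose support ``sees'' every prime of $n$ (equivalently $\lcm_j \ord(g_j)=n$), and then absorb the $\sum a_i e_i$ correction into elements of matching order. Once that preparatory step is secured, your construction of $\tilde S$ and the minimality check via the two projections are correct and, if anything, a bit slicker than the paper's prime-by-prime modification.

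The gap is in your splitting procedure. You replace one occurrence of $g$ (of order $d_g<n$) by $h^{n/d_g-1}\bigl(g-(n/d_g-1)h\bigr)$ and assert that the resulting sequence stays a minimal zero-sum sequence provided $h$ avoids certain residues. But the minimality conditions you name --- that no $-jh$ and no $jh-g$ lie in $\Sigma(g^{-1}S)\cup\{0\}$ for $j\in[1,n/d_g-1]$ --- amount to up to $2(n/d_g-1)\cdot\bigl(|\Sigma(g^{-1}S)|+1\bigr)$ forbidden values for $h$, and $\Sigma(g^{-1}S)$ can be very large. The claim that ``the remaining degrees of freedom in $h$'' absorb all of this within a CRT argument is not substantiated; nothing you have written bounds the number of constraints below $\phi(n)$. (It may well be salvageable by exploiting that $\Sigma(g^{-1}S)$ lies in the proper subgroup $\langle\supp(S)\rangle$, but you have not carried this out, and the ``iterating the procedure'' phrase suggests you have not thought through what a second pass would even look like.)

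The paper sidesteps this difficulty entirely via Lemma~\ref{val lemma}: instead of jumping all the way to a generator, it lifts one prime at a time, replacing an element $g$ of sub-maximal $p_0$-valuation by $g_0^{p_0}$ where $p_0 g_0=g$. Minimality of the new sequence is then automatic, because any short zero-sum would force a subset-sum of $g^{-1}S$ to equal $-kg_0$ for some $0<k<p_0$, which is impossible on $p_0$-valuation grounds (all elements of $g^{-1}S$ have $p_0$-valuation at most that of $g$, but $kg_0$ has strictly larger $p_0$-valuation). This yields exactly the hypothesis you need --- for each prime $p\mid n$ there is an element of $\supp(S)$ with full $p$-valuation --- without any avoidance argument. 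You should either invoke that lemma directly or replace your splitting procedure with this prime-by-prime lift.
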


The proof is similar to that of Proposition \ref{w-C_n+C_q's}, but a problem is that we cannot simply take the concatenation of zero-sum free sequences. Instead we need to maintain the property that the sequence is a minimal zero-sum sequence while at the same time controlling the cross number. Recall that if $U_i$ is a minimal zero-sum sequence over $G_i$ and $g_i \mid U_i$ for $i \in [1,2]$,  
then $(g_1^{-1}U_1)(g_2^{-1}U_2)(g_1 + g_2)$ is a minimal zero-sum sequence over $G_1 \oplus G_2$ of length $|U_1|+|U_2|-1$. However, to control the cross number we need some information on the orders of the elements involved. 

To this end we establish the following lemma. Let $p$ be a prime, we denote by $\mathsf{v}_p(n)$ the $p$-adic valuation, or $p$-valuation for short, of a positive natural number $n$.   

\begin{lemma}\label{val lemma}
Let $n \ge 2$. For each $w \in \mathsf W(C_n)$, there is an $A \in \mathcal{A}(C_n)$ with $\mathsf k(A)=w$ and for each prime divisor $p$ of  $n$ there exists a $g \in \supp (A)$ such that $\mathsf v_p(\ord(g))=\mathsf v_p(n)$.
\end{lemma}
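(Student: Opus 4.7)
The plan begins with a reformulation. For $g \in C_n$ with integer representative $g_0 \in \{0, \dots, n-1\}$ and a prime $p$ dividing $n$, the identity $\ord(g) = n/\gcd(g_0, n)$ gives $\mathsf{v}_p(\ord(g)) = \mathsf{v}_p(n) - \mathsf{v}_p(\gcd(g_0, n))$, so the condition $\mathsf{v}_p(\ord(g)) = \mathsf{v}_p(n)$ is equivalent to $p \nmid g_0$, that is, to $g \notin pC_n$. Since the maximal subgroups of $C_n$ are precisely the $pC_n$ for primes $p \mid n$, the lemma's conclusion for $A$ is equivalent to $\langle \supp(A) \rangle = C_n$. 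The reformulated task is: for each $w \in \mathsf{W}(C_n)$, exhibit some $A \in \mathcal{A}(C_n)$ with $\mathsf{k}(A) = w$ and with support generating $C_n$.

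I would start from any $A^* \in \mathcal{A}(C_n)$ realizing $w$ and set $H = \langle \supp(A^*) \rangle \cong C_{m'}$ for some $m' \mid n$; then $A^*$ is a minimal zero-sum sequence over $C_{m'}$ with generating support. Writing $\mathsf{W}^{\mathrm{gen}}(C_m)$ for the set of cross numbers of minimal zero-sum sequences over $C_m$ whose support generates $C_m$, the goal becomes $\mathsf{W}^{\mathrm{gen}}(C_{m'}) \subseteq \mathsf{W}^{\mathrm{gen}}(C_n)$. By induction on the number of prime factors (with multiplicity) of $n/m'$, this reduces to the single inductive step: for every prime $p$ and every $m$ with $mp \mid n$, one has $\mathsf{W}^{\mathrm{gen}}(C_m) \subseteq \mathsf{W}^{\mathrm{gen}}(C_{mp})$.

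For this one-step claim, given $A \in \mathcal{A}(C_m)$ with generating support, I view $A$ inside $C_{mp}$ via the embedding $C_m \cong pC_{mp}$. The construction is a splitting: choose some $g \in \supp(A)$, choose a lift $g' \in C_{mp}$ with $pg' = g$, and form $A'$ from $A$ by removing one copy of $g$ and appending $p$ copies of $g'$. The sum is automatically preserved. Minimality holds as long as $g' \notin pC_{mp}$: a proper non-trivial subsequence of $A'$ using $0$ or $p$ copies of $g'$ corresponds to a subsequence of $A$ and is killed by the minimality of $A$, whereas one using $k \in [1, p-1]$ copies projects under $\pi \colon C_{mp} \to C_{mp}/pC_{mp} \cong C_p$ to $k\pi(g') \ne 0$ and is therefore nonzero in $C_{mp}$. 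The cross number is preserved iff $\ord(g') = p\,\ord(g)$.

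The main obstacle is producing a lift $g'$ that simultaneously satisfies both $g' \notin pC_{mp}$ and $\ord(g') = p\,\ord(g)$, and I plan to handle it by a case distinction on whether $p \mid m$. If $p \mid m$, the generating assumption on $A$ forces some coefficient $a_i$ in the $C_m$-description of $\supp(A)$ to be coprime to $p$; taking $g$ to be this element makes every one of the $p$ available lifts $g'_0 \equiv a_i \pmod{m}$ satisfy $p \nmid g'_0$ (because $p \mid m$), and the order condition follows because $\mathsf{v}_p(g_0) = 1$ is strictly less than $\mathsf{v}_p(mp)$. If $p \nmid m$, then $\mathsf{v}_p(mp) = 1$ and as $k$ ranges over $[0, p-1]$ the residues $a_i + km \pmod{p}$ run through all of $\mathbb{Z}/p\mathbb{Z}$, so exactly $p-1$ of the $p$ lifts satisfy $p \nmid g'_0$, and those automatically realize the required order. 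In either case a short $\gcd$-computation using $\gcd(a_i, \gcd(\supp(A) \setminus \{g\}, m)) = 1$ (a consequence of the generating hypothesis on $A$) shows that $\supp(A')$ generates $C_{mp}$. The base case $m=1$ of the induction is immediate: the MZS $0^1 \in \mathcal{A}(C_1)$ lifts to $(g')^p \in \mathcal{A}(C_p)$ for any nonzero $g' \in C_p$, with cross number $1$.
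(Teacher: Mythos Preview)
Your proof is correct and rests on the same core construction as the paper: replace one occurrence of an element $g$ by $p$ copies of a preimage $g'$ under multiplication by $p$, chosen so that both minimality and the cross number are preserved. The difference is organizational. You first recast the conclusion as $\langle \supp(A)\rangle = C_n$ and then climb from $C_{m'}$ to $C_n$ one prime factor at a time via the embedding $C_m \cong pC_{mp}$; this makes the minimality check clean, since every element of $A'$ other than $g'$ lies in $pC_{mp}$ and the projection to $C_{mp}/pC_{mp}\cong C_p$ kills them. The paper instead works inside $C_n$ throughout and, for each prime $p_0$ in turn, selects $g$ of \emph{maximal} $p_0$-valuation of order before splitting it as $g_0^{p_0}$; that maximality plays exactly the role your embedding does (all remaining elements lie in a subgroup not containing $g_0$), though the paper leaves the minimality verification implicit. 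Your generating-support reformulation is a tidy conceptual gain and makes the inductive invariant explicit; the paper's version is marginally shorter to execute since it avoids changing the ambient group.
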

\begin{proof}
Let $w \in \mathsf W(C_n)$. Then there exists a $T \in \mathcal{A}(C_n)$ with $\mathsf k(T)=w$. Let $p_0\mid n$ be a prime and let $g \in \supp(T)$ be an element whose order has maximal $p_0$-valuation among the orders of elements in $\supp(T)$. If  $\mathsf v_{p_0}(\ord(g))=\mathsf v_{p_0}(n)$, there is nothing to do. Thus, assume that  $\mathsf v_{p_0}(\ord(g))<\mathsf v_{p_0}(n)$. There exists some $g_0 \in C_n$ such that $p_0g_0=g$. We have $\ord(g_0)=p_0\ord(g)$ and this means $\mathsf v_{p_0}(\ord(g_0))=\mathsf v_{p_0}(\ord(g))+1$, while $\mathsf v_{p}(\ord(g_0))=\mathsf v_{p}(\ord(g))$ for every other prime $p$.  Now, the sequence $S=(Tg^{-1})(g_0)^{p_0}$ is a minimal zero-sum sequence with the same cross number $\mathsf k(S)=\mathsf k(T)- \frac{1}{\ord(g)}+ \frac{p_0}{\ord(g_0)}=\mathsf k(T)=w$.
	
Now if $\mathsf v_{p_0}(\ord(g_0))=\mathsf v_{p_0}(n)$, we are done otherwise we repeat the same steps starting with  $S$ instead of $T$ until  we obtain a sequence that contains an element whose order has $p_0$-valuation equal to that of $n$. 

Repeating the argument for each prime $p_0$  dividing $n$, the assertion follows. Note that the above mentioned process only affects the valuations for the prime $p_0$, whence there is no interference between the different steps. 
\end{proof}

With this lemma at hand we can prove the proposition. 

\begin{proof}[Proof of Proposition \ref{W-C_n+C_q's}]
Assume $ C_ {q_i}=\langle e_i \rangle$ where $\ord(e_i)=q_i$ for $i \in [1, t]$. 
Let $[1,t]= \uplus_{j=1}^s I_j$ where the order of $q_i$ for $i \in I_j$ is a $p_j$-power for some prime $p_j$; 
in addition, we assume that all the $I_j$ are non-trivial and the $p_j$ are pairwise distinct. 
In other words, $\oplus_{j=1}^s (\oplus_{i \in I_j}  \langle e_i \rangle)$ is a decomposition into $p$-groups of $\bigoplus_{i=1}^{t}C_{q_i}$, 
and $|I_j|$ is the $p_j$-rank of the group. 

We have 
\[
G= C_n \oplus  \bigoplus_{j=1}^s (\oplus_{i \in I_j}  \langle e_i \rangle). 
\]
Let $w_G=w_c+ \sum_{i=1}^{t} \frac{j_i}{q_i} \in \mathsf W(C_n)+ \sum_{i=1}^{t} \frac{1}{q_i}[0, q_i-1] $ where $w_c \in \mathsf{W}(C_n)$ and $j_i \in [0, q_i-1]$, which we can also write as 		
\[
w_G=w_c+ \sum_{j=1}^s \sum_{i\in I_j}  \frac{j_i}{q_i}.
\]
Then by Lemma \ref{val lemma}, there exists some $S_c \in \mathcal{A}(C_n)$ such that $\mathsf k(S_c) =w_c$ and for each $i \in [1,s]$ the sequence $S_c$ contains an element $g_i$ whose   $p_i$-valuation  is maximal, that is,  its $p_i$-valuation is $\mathsf{v}_{p_i}(n)$. Note that the $g_i$ are not necessarily distinct, which causes slight complication. 

We now  construct, recursively, a sequence with the desired cross number. For clarity, we present the first step in detail and then briefly line out the general step. Let $g_{1}$ be an element in $S_c$ such that the $p_1$-valuation of $g_1$ is maximal. Let
\[ 
g_1^{\ast}=g_{1}- \sum_{i\in I_1}  j_{i}e_{i},
\]
then $\ord(g_1^{\ast})=\ord (g_1)$. The sequence
\[
S_1= g_1^{-1}g_1^{\ast}S_c \prod_{i\in I_1}  e_{i}^{j_i}
\]
is a minimal zero-sum sequence over $C_n \oplus (\oplus_{i \in I_1}  \langle e_i \rangle)$ and $\mathsf k(S_1)= \mathsf k(S_c)-  \frac{1}{\ord(g_1)} + \frac{1}{\ord (g_1^{\ast})}+ \sum_{i \in I_1} \frac{j_i}{q_i}$.
Moreover $S_1$ contains an element whose order has $p_i$-valuation equal to $\mathsf{v}_{p_i}(n)$ for each $i \in [1,s]$. 

The result now follows by repeating this process. Assume that for $k \in [1,s-1]$ we have a sequence $S_k$ over
$C_n \oplus  \bigoplus_{j=1}^k (\oplus_{i \in I_j}  \langle e_i \rangle)$ with cross number $w_c+ \sum_{j=1}^k \sum_{i\in I_j}  \frac{j_i}{q_i}$ that contains an element of whose   $p_i$-valuation  is maximal, that is,  its $p_i$-valuation is $\mathsf{v}_{p_i}(n)$ for each $i \in [1,s]$.

Now let $g_{k+1}$ be in $S_k$ such that the $p_{k+1}$-valuation of $g_{k+1}$ is maximal. Let
\[ 
g_{k+1}^{\ast}=g_{k+1}- \sum_{i\in I_{k+1}}  j_{i}e_{i},
\]
then $\ord(g_{k+1}^{\ast})=\ord (g_{k+1})$. The sequence
\[
S_{k+1}= g_{k+1}^{-1} g_{k+1}^{\ast}S_k \prod_{i\in I_{k+1}}  e_{i}^{j_i}
\]
is a minimal zero-sum sequence over $C_n \oplus    \bigoplus_{j=1}^k (\oplus_{i \in I_j}  \langle e_i \rangle)$ and $\mathsf k(S_{k+1})= \mathsf k(S_k)-   \frac{1}{\ord(g_{k+1})} + \frac{1}{\ord (g_{k+1}^{\ast})}+ \sum_{i \in I_{k+1}} \frac{j_i}{q_i}$.
Moreover $S_{k+1}$ contains an element whose order has $p_i$-valuation equal to $\mathsf{v}_{p_i}(n)$ for each $i \in [1,s]$. 

The proof is complete by observing that $S_s$ has the required property. 
\end{proof}

\begin{theorem} 
Let $G = H \oplus C_n$ be a finite abelian group with $\exp(G)=n$.
\begin{enumerate}
\item If $n$ is odd, then
\[
\frac{1}{n} [2, n+ n\mathsf{k}^{\ast}(H)] \subseteq \mathsf W(G) \,.\]
\item If $n$ is even, then  
\[
\frac{2}{n} [1, \frac{1}{2}(n+ n\mathsf{k}^{\ast}(H))] \subseteq \mathsf W(G) \,.\]\end{enumerate}
\end{theorem}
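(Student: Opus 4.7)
The plan is to mirror the proof of Proposition \ref{w-C_n+C_q's(1)}, with Proposition \ref{W-C_n+C_q's} now playing the role that Proposition \ref{w-C_n+C_q's} played before. First I write $H = \bigoplus_{i=1}^{t} C_{q_i}$ in its prime-power decomposition, so each $q_i$ divides $n$. Proposition \ref{W-C_n+C_q's} then yields
\[
\sum_{i=1}^{t} \tfrac{1}{q_i}[0, q_i - 1] + \mathsf{W}(C_n) \subseteq \mathsf{W}(G),
\]
and it remains, in each case, to plug in a lower bound for $\mathsf{W}(C_n)$ from the preliminaries and to iterate Lemma \ref{addition}.

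For case (1), $n$ odd gives $\mathsf{W}(C_n) \supseteq \tfrac{1}{n}[2, n]$. After scaling by $n$, each set $\tfrac{n}{q_i}[0, q_i - 1]$ is an arithmetic progression of common difference $n/q_i \le n/2 \le n - 1$, and Lemma \ref{addition}, applied one factor at a time, produces the integer interval $[2, n + \sum_i n(q_i - 1)/q_i] = [2, n + n\mathsf{k}^{\ast}(H)]$; dividing by $n$ gives the claim.

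For case (2), $n$ even gives only $\mathsf{W}(C_n) \supseteq \tfrac{2}{n}[1, n/2]$, the even multiples of $1/n$ in $[2, n]$. In the sub-case where $H$ contains no $C_{2^k}$ summand (with $2^k$ the $2$-part of $n$), every $n/q_i$ is even, so the entire construction sits inside $\tfrac{2}{n}\mathbb{Z}$; halving and applying Lemma \ref{addition} exactly as in case (1) produces the desired even interval $\tfrac{2}{n}[1, (n + n\mathsf{k}^{\ast}(H))/2]$.

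The main obstacle is the remaining sub-case, in which $H$ contains $C_{2^k}$ as a summand. Here $G$ contains $C_{2^k}^2$, the preliminaries upgrade to $\tfrac{1}{n}[2, n] \subseteq \mathsf{W}(G)$ (the full integer interval, not merely its even multiples), but a direct application of Proposition \ref{W-C_n+C_q's} falls short of the claimed top endpoint. I would overcome this by upgrading the base of Proposition \ref{W-C_n+C_q's} from $C_n$ to $G_0 := C_{2^k} \oplus C_n$, verifying that the proof of Lemma \ref{val lemma} extends verbatim to $G_0$ (for every prime $p \mid n$, $G_0$ still admits $p$-divisions that raise submaximal $p$-valuations, because it contains a cyclic subgroup of order $n$); this yields the analogous inclusion $\sum_{q_j \ne 2^k} \tfrac{1}{q_j}[0, q_j - 1] + \mathsf{W}(G_0) \subseteq \mathsf{W}(G)$. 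Combining the preliminaries' inclusion $\tfrac{1}{n}[2, n] \subseteq \mathsf{W}(G_0)$ with an internal application of Proposition \ref{W-C_n+C_q's} to $G_0$ (in the spirit of Theorem \ref{3.2}(1) for $C_{2^k}^2$) shows that $n\mathsf{W}(G_0)$ reaches far enough on the right; appending the remaining $C_{q_j}$-contributions with $q_j \ne 2^k$ (all of which have $n/q_j$ even) via a final Lemma \ref{addition} iteration then yields the claim.
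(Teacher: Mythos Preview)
Your handling of case (1) and of the sub-case of (2) in which no $q_i$ equals the full $2$-part $2^k$ of $n$ is correct and is precisely the argument the paper intends: the theorem is stated without proof, as a direct consequence of Proposition \ref{W-C_n+C_q's} combined with Lemma \ref{addition} and the known lower bounds for $\mathsf{W}(C_n)$ from Section \ref{sec_prel}, in complete parallel to how Theorem \ref{4.8} is deduced from Proposition \ref{w-C_n+C_q's(1)}.

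You are also right to isolate the remaining sub-case (some $q_i = 2^k$, so $n/q_i$ is odd) as the one place where this direct route does not obviously reach the stated upper endpoint. However, your proposed repair does not actually close the gap. The decisive assertion is that ``$n\mathsf{W}(G_0)$ reaches far enough on the right'', and the two ingredients you list do not establish it. Take the smallest genuine instance $G = G_0 = C_4 \oplus C_{12}$ (so $n=12$, $2^k = 4$, and there are no remaining $q_j$). The claimed top even element of $12\,\mathsf{W}(G_0)$ is $20$. The preliminaries give $[2,12] \subseteq 12\,\mathsf{W}(G_0)$; Proposition \ref{W-C_n+C_q's} applied internally to $G_0$ gives $\{0,3,6,9\} + 12\,\mathsf{W}(C_{12}) \subseteq 12\,\mathsf{W}(G_0)$, and with $12\,\mathsf{W}(C_{12}) \supseteq \{2,4,\ldots,12\}$ this yields $\{2,4,5,\ldots,19,21\}$ --- so $20$ is obtained from neither source, and there is nothing for a final Lemma \ref{addition} step to act on. Invoking Theorem \ref{3.2}(1) for $C_{2^k}^2$ does not help, since $G_0$ is not a $2$-group when $n$ is not a $2$-power. (In fact $20/12 \in \mathsf{W}(G_0)$, via $S = (2f)^5 e^3 (e+2f)$ with $\ord(e)=4$, $\ord(f)=12$; but this sequence is not produced by the mechanism of Proposition \ref{W-C_n+C_q's}.) Since the paper itself gives no proof of the theorem, this sub-case is left open on both sides; your outline does not resolve it.
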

We note that in case $\mathsf{v}_2 (\exp(H)) < \mathsf{v}_2 (n)$, we have $\mathsf W (G) \subseteq \frac{2}{n}[1, \frac{n}{2} \mathsf K(G)]$ as recalled at the end of Section \ref{sec_prel}, and thus the second part of the result is quite tight, too. However, in case $\mathsf{v}_2 (\exp(H)) = \mathsf{v}_2 (n)$, there is a considerable gap. In Proposition \ref{W(G_p + C_2^r)} we close this gap in a special case.

\section{When are $\mathsf w(G)$  and  $\mathsf W(G)$ arithmetic progressions?}

In earlier sections we showed that for a wide variety of groups the initial parts of the sets $\mathsf w(G)$  and  $\mathsf W(G)$ are arithmetic progressions with difference $\frac{1}{\exp(G)}$ or sometimes $\frac{2}{\exp(G)}$. In the current section we study when the full sets actually are arithmetic progressions. We first recall that this is the case for $p$-groups, more precisely  for $G = C_{n_1} \oplus \ldots \oplus C_{n_r}$ a finite abelian $p$-group with $1=n_0  < n_1 \mid \dots \mid n_r$ we have that $\mathsf{W}(G)$ and $\mathsf{w}(G)$ are arithmetic progressions with difference $\frac{1}{\exp(G)}$, unless $p=2$ and $n_{r-1} < n_r$ in which case it is an arithmetic progression with difference $\frac{2}{\exp(G)}$, see Theorem \ref{3.2}.

We now consider groups that are the direct sum of an elementary $2$-group and a $p$-group and establish in some cases that sets of cross numbers are arithmetic progressions, too. While this is a quite special class of group, there are reasons to assume that the phenomenon is quite rare; we discuss this at the end of this paper.  

We start with a result for cyclic groups; the assertion on $\mathsf{W}(G)$ is known by a result of Baginski et al. \cite[Theorem 2.1]{B-C-M-P04}.

\begin{theorem}\label{w-2p^k}
Let $G=C_{2p^k}$ with $k \in \mathbb{N}$ and $p$ be a prime. Then	
	\[
	\mathsf W(G)=\frac{2}{\exp(G)}[1, \frac{\exp(G)}{2} \mathsf K^{\ast}(G)]  \quad and \quad  \mathsf w(G)=\frac{1}{\exp(G)}[1, \exp(G)\mathsf k^{\ast} (G)] \,.
	\]
\end{theorem}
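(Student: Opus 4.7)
The plan is as follows. The assertion on $\mathsf{W}(G)$ is exactly the content of \cite[Theorem 2.1]{B-C-M-P04}, so nothing new is needed there. That result in particular yields $\max \mathsf W(G) = \mathsf K^{\ast}(G)$, whence $\mathsf K(G) = \mathsf K^{\ast}(G)$; the computation recalled at the end of Section \ref{sec_prel} then forces $\mathsf k(G) = \mathsf k^{\ast}(G)$, giving the containment $\mathsf w(G) \subseteq \frac{1}{\exp(G)}[1, \exp(G)\mathsf k^{\ast}(G)]$. The remaining task is the reverse containment for $\mathsf w(G)$.

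If $p = 2$, then $G = C_{2^{k+1}}$ is a cyclic $2$-group and the conclusion follows directly from Corollary \ref{w-p groups}. I therefore focus on the case of odd $p$, in which $G \cong C_2 \oplus C_{p^k}$. Fix generators $e_1, e_2$ with $\ord(e_1) = 2$, $\ord(e_2) = p^k$, so that $\ord(e_1 + e_2) = 2p^k$. The plan is to exhibit the family
\[
S_{a,b,c} \;=\; e_1^{\,a}\,(e_1 + e_2)^{\,b}\,e_2^{\,c}, \qquad (a,b) \in \{0,1\}^2,\ c \in \N_0,
\]
and verify that for admissible parameters these are zero-sum free sequences whose cross numbers exhaust the target set $\frac{1}{2p^k}[1, 3p^k - 2] = \frac{1}{\exp(G)}[1, \exp(G)\mathsf k^{\ast}(G)]$.

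A direct analysis of the sub-sums $a' e_1 + b'(e_1 + e_2) + c' e_2 = ((a'+b') \bmod 2,\, b' + c')$ shows that $S_{a,b,c}$ is zero-sum free exactly when $c \le p^k - 1$ in general, with the additional requirement $c \le p^k - 2$ if $(a,b) = (1,1)$; the newly excluded case is the sub-sum $e_1 + (e_1 + e_2) + e_2^{p^k - 1} = 0$. A short computation gives $2p^k\, \mathsf k(S_{a,b,c}) = a p^k + b + 2c$. Running through the four choices of $(a,b)$ and all admissible $c$ yields the numerators $\{2, 4, \ldots, 2p^k - 2\}$ from $(0,0)$ with $c \in [1, p^k - 1]$, $\{1, 3, \ldots, 2p^k - 1\}$ from $(0,1)$, $\{p^k, p^k + 2, \ldots, 3p^k - 2\}$ from $(1,0)$, and $\{p^k + 1, p^k + 3, \ldots, 3p^k - 3\}$ from $(1,1)$. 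Since $p$ odd and $k \ge 1$ give $p^k \ge 3$, the two odd-valued progressions overlap (the first reaches $2p^k - 1 \ge p^k$), and so do the two even-valued ones ($2p^k - 2 \ge p^k + 1$). Their union is all of $[1, 3p^k - 2]$, which completes the proof.

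The main obstacle, such as it is, is minor: it consists of the zero-sum freeness check (most notably spotting the single additional restriction in the case $(a,b) = (1,1)$) and the parity bookkeeping needed so that the two odd-valued and two even-valued arithmetic progressions fit together without gaps over the target interval.
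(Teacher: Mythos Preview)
Your proof is correct and uses essentially the same construction as the paper: zero-sum free sequences built from $e_1$, $e_2$, and $e_1+e_2$. The paper covers $\frac{1}{2p^k}[1,2p^k-1]$ via powers of a single generator of order $2p^k$ and then handles the top of the interval with sequences that coincide with your $S_{1,0,c}$ and $S_{1,1,c}$; your uniform $S_{a,b,c}$ parameterization is slightly tidier, and your explicit treatment of the case $p=2$ fills a small gap the paper leaves implicit.
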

\begin{proof}
	The statement on $\mathsf W(G)$ was proved by Baginski et al. in \cite[Theorem 2.1]{B-C-M-P04}.   
We need to prove the result for $\mathsf{w}(G)$. 
Based on the outcome concerning $\mathsf{W}(G)$, note that we have $\mathsf k(G)=\mathsf k^{\ast}(G)=\frac{3p^k-2}{2p^k}$. Now for some $g \in G$ with $\ord(g)=2p^k$, take $S=g^i \in \mathcal{A}^{\ast}(G)$ for $i \in [1, 2p^k-1]$ whence
	\[
	\frac{1}{2p^k}	[1, 2p^k-1] \subseteq \mathsf w(G) \subseteq \frac{1}{2p^k} [1, 3p^k-2]
	\]
It remains to obtain the missing elements as cross numbers of a zero-sum free sequence.  Write $G=C_2 \oplus C_{p^k}$ and suppose $(e,f)$ is a generating set of $G$ with $\ord(e)=2$ and $\ord(f)=p^k$. For $1 \leq l \leq \frac{p^k-1}{2}$, set
		\[
	B_l=e(e+f)f^{p^k-1-l} \in \mathcal{A}^{\ast}(G) \quad and \quad B=B_l(e+f)^{-1} \in \mathcal{A}^{\ast}(G)
	\]
	then
	
	\[
	\mathsf k(B_l)=\frac{3p^k-1-2l}{2p^k} \quad and \quad \mathsf k(B)=\mathsf k(B_l)-\frac{1}{2p^k} \,.
	\]
	Altogether, $B_l$ and $B$ give us $[2p^k, 3p^k-3] \subseteq 2p^k \mathsf w(G)$. Now if we put $l=0$ in $B_l$ then $B_0 \in \mathcal{A}(G)$ but $B=B_0(e+f)^{-1} \in \mathcal{A}^{\ast}(G)$ and $k(B)=\frac{3p^k-2}{2p^k}$ and the proof is complete.
\end{proof}

Combining this results with results from the preceding section we obtain more general results.
We start by a result for $\mathsf{w}(G)$, which are again easier to obtain.

\begin{proposition} \label{w(G_p + C_2^r)}
Let $G_p$ be a $p$-group for an odd prime $p$ and let $G= C_2^r \oplus G_p$ for some $r \in \mathbb{N}$.	
Then	
		\[  \frac{1}{\exp(G)} [1, \exp(G) \mathsf k^{\ast}(G)] \subseteq \mathsf w(G) \subseteq \frac{1}{\exp(G)} [1, \exp(G) \mathsf k(G)] 
		\]		
		In particular, if $\mathsf k(G)=\mathsf k^{\ast}(G)$,  then equality holds and $\mathsf{w}(G)$ is an arithmetic progression. 
\end{proposition}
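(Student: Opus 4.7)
The upper containment is immediate from the general bound $\mathsf w(G) \subseteq \frac{1}{\exp(G)}[1, \exp(G)\mathsf k(G)]$ recalled at the end of Section \ref{sec_prel}. For the lower containment, the plan is to rerun the argument of Proposition \ref{w-C_n+C_q's(1)}, substituting the weak starting interval from Lemma \ref{w(k<=1)} with the sharp description of $\mathsf w(C_{2p^k})$ supplied by Theorem \ref{w-2p^k}. I may assume $G_p$ is non-trivial; otherwise $G = C_2^r$ is a $2$-group and Corollary \ref{w-p groups} applies directly.

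Decompose $G_p = C_{p^{k_1}} \oplus \cdots \oplus C_{p^{k_s}}$ with $1 \le k_1 \le \cdots \le k_s$, so that $\exp(G_p) = p^{k_s}$ and, since $p$ is odd, $n := \exp(G) = 2p^{k_s}$. Using $\gcd(2,p) = 1$, I would merge one copy of $C_2$ with the factor $C_{p^{k_s}}$ to rewrite
\[
G \cong C_n \oplus C_2^{r-1} \oplus C_{p^{k_1}} \oplus \cdots \oplus C_{p^{k_{s-1}}} = C_n \oplus \bigoplus_{i=1}^t C_{q_i},
\]
where each $q_i$ is either $2$ or $p^{k_j}$ for some $j < s$, and in every case divides $n$. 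Proposition \ref{w-C_n+C_q's} then yields $\sum_{i=1}^t \frac{1}{q_i}[0, q_i-1] + \mathsf w(C_n) \subseteq \mathsf w(G)$, while Theorem \ref{w-2p^k} supplies the improved starting point $n\, \mathsf w(C_n) = [1, 3p^{k_s} - 2] = [1, n\,\mathsf k^{\ast}(C_n)]$.

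Scaling by $n$, I would then successively add the arithmetic progressions $A_i := \frac{n}{q_i}[0, q_i-1]$ to $[1, 3p^{k_s} - 2]$, one at a time, via Lemma \ref{addition}. The common difference of $A_i$ is $n/q_i$, equal to $p^{k_s}$ when $q_i = 2$ and to $2 p^{k_s - k_j}$ when $q_i = p^{k_j}$ with $j < s$; in both cases this jump is at most $p^{k_s}$ (using $p \ge 3$), while the running interval length starts at $3p^{k_s} - 2$ and only grows through the induction, so the hypothesis of Lemma \ref{addition} is met at every step. Each application extends the interval by $(q_i - 1) n / q_i$, so after all $t$ steps the upper endpoint becomes $3p^{k_s} - 2 + \sum_{i=1}^t (q_i - 1) n / q_i$. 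A direct computation using $\mathsf k^{\ast}(G) = r/2 + \mathsf k^{\ast}(G_p)$ together with $n = 2 p^{k_s}$ shows this equals $n\,\mathsf k^{\ast}(G)$, which gives the desired inclusion $\frac{1}{n}[1, n\,\mathsf k^{\ast}(G)] \subseteq \mathsf w(G)$.

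The main obstacle is the bookkeeping: one must verify the gap condition of Lemma \ref{addition} case-by-case (for $q_i = 2$ and for $q_i = p^{k_j}$ with $j < s$) and confirm that the final endpoint matches $n\,\mathsf k^{\ast}(G)$; both steps are elementary once the decomposition above is in place. The ``in particular'' claim is then immediate: if $\mathsf k(G) = \mathsf k^{\ast}(G)$, the two inclusions coincide and force $\mathsf w(G) = \frac{1}{\exp(G)}[1, \exp(G)\,\mathsf k(G)]$, an arithmetic progression with common difference $\frac{1}{\exp(G)}$.
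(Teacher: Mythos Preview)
Your proof is correct and follows exactly the route the paper takes: invoke Theorem \ref{w-2p^k} to get the full interval for $\mathsf w(C_{\exp(G)})$, then feed this into the sumset inclusion of Proposition \ref{w-C_n+C_q's} and collapse the resulting sumset to an interval via Lemma \ref{addition}, just as in the proof of Proposition \ref{w-C_n+C_q's(1)}. The paper compresses all of this into two sentences and leaves the interval-building step implicit; you have written it out in detail, including the case distinction for the gap bound and the endpoint check, but there is no substantive difference in strategy.
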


\begin{proof}
The inclusion $\mathsf{w}(G) \subseteq \frac{1}{\exp(G)} [1, \exp(G) \mathsf k(G)] $ is obvious. We show $\frac{1}{\exp(G)} [1, \exp(G) \mathsf k^{\ast}(G)] \subseteq \mathsf{w}(G)$. By Theorem \ref{w-2p^k} the result is true for $\mathsf{w} (C_{\exp(G)})$.  The result now follows by  Proposition \ref{w-C_n+C_q's}.
\end{proof}

We now want a similar result for $\mathsf{W}(G)$. We need a technical lemma. 

\begin{lemma} \label{cyc-p^k-0}  
Let $G$ be a cyclic $p$-group with $G=\langle f\rangle$ where $\ord(f)=p^k$ for some odd prime $p$ and some $k \in \mathbb N$.
\begin{enumerate}
		\item For each $j \in [1, p^k-1]$, there exists some $S_j \in \mathcal{A}^{\ast}(G)$ such that $\sigma(S_j)=-f$ and $\mathsf k(S_j)=\frac{j}{p^k}$.
		\item For each $j \in [1, p^k-2]$, there exists some $T_j \in \mathcal{A}^{\ast}(G)$ such that $\sigma(T_j)=-2f$, $-f \notin \Sigma (T_j)$ and $\mathsf k(T_j)=\frac{j}{p^k}$.
\end{enumerate}
\end{lemma}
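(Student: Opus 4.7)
The plan is to build both $S_j$ and $T_j$ entirely from elements of the form $af$ with $\gcd(a,p)=1$, i.e.\ of order $p^k$, so that each term contributes $1/p^k$ to the cross number and a sequence of length $j$ has cross number exactly $j/p^k$. The task then reduces to choosing integer coefficients with prescribed coefficient sum ($-1$ for part (1), $-2$ for part (2)), all coprime to $p$, such that no nonempty partial sum vanishes modulo $p^k$ (and in part (2) none is congruent to $-1$ modulo $p^k$).

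For part (1) I would split on whether $p \mid j$. If $p \nmid j$, set $S_j = f^{j-1}\cdot(-jf)$; all terms have order $p^k$ since $\gcd(j,p)=1$, the coefficient sum is $-1$, and partial sums of the form $(a - cj)f$ with $a \in [0,j-1]$ and $c \in \{0,1\}$ lie in an integer interval of length strictly less than $p^k$, so they cannot hit a nonzero multiple of $p^k$. If $p \mid j$ (so $j \ge p \ge 3$), set $S_j = f^{j-2}\cdot(2f)\cdot(-(j+1)f)$; here $p$ being odd gives $\gcd(2,p)=1$, and $p \mid j$ gives $\gcd(j+1,p)=1$, so all orders are $p^k$. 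The coefficient sum is $(j-2)+2-(j+1)=-1$, and checking the four partial-sum ranges (indexed by whether $2f$ and $-(j+1)f$ are present) against the bound $j \le p^k-1$ rules out any accidental zero sum.

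Part (2) follows the same template, now with coefficient sum $-2$ and the added requirement that no partial sum be congruent to $-1$ modulo $p^k$. If $p \nmid j+1$, take $T_j = f^{j-1}\cdot(-(j+1)f)$. If $p \mid j+1$, then $\gcd(j+2,p)=1$ and $j \ge p-1 \ge 2$, so take $T_j = f^{j-2}\cdot(2f)\cdot(-(j+2)f)$. In each case all coefficients are coprime to $p$, the coefficient sum equals $-2$, and a short interval analysis (using $j \le p^k-2$, and in the second subcase the sharper bound $j+1 \le p^k - p$) shows that no nonempty partial sum is a multiple of $p^k$ or is congruent to $-1$ modulo $p^k$.

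The principal obstacle is the case split on $p \mid j$ (resp.\ $p \mid j+1$): a uniform choice of final coefficient produces an element of order strictly less than $p^k$ and thereby inflates the cross number past $j/p^k$. The device of inserting a single $2f$, available precisely because $p$ is odd, shifts the problematic coefficient by one unit into the coprime-to-$p$ region while preserving the length and hence the cross number. After this adjustment, the verifications of zero-sum freeness and of $-f \notin \Sigma(T_j)$ reduce to bounding a handful of partial sums in an integer interval, a bookkeeping task that the ranges $j \in [1, p^k-1]$ and $j \in [1, p^k-2]$ are tailored to accommodate.
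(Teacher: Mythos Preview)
Your proposal is correct and essentially identical to the paper's own proof: the paper uses precisely the sequences $f^{j-1}((p^k-j)f)$ and, in the divisible case, $f^{j-2}(2f)((p^k-j-1)f)$ for part~(1), and the analogous shift by one for part~(2), which are your constructions written additively modulo $p^k$ rather than with negative coefficients. Your write-up is in fact somewhat more careful than the paper's in spelling out the partial-sum interval checks and the bound $j+1\le p^k-p$ in the second subcase.
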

\begin{proof}
1. Take $S_j=f^{j-1}((p^k-j)f)$ for some $j \in [1, p^k-1]$, then $S_j$ is a zero-sum free sequence with sum $-f$. 
We have $\mathsf k(S_j)=\frac{j}{p^k}$ unless $p^k-j$ is divisible by $p$. 
Suppose $p^k-j$ is a multiple of $p$, that means $j$ is a multiple of $p$, but then $j-1$ is not a multiple of $p$. Therefore, $S_j'= f^{j-2}(2f)((p^k-j-1)f)$ is a zero-sum free sequence that fulfills the requirements.

2. Take $T_j=f^{j-1}((p^k-j-1)f)$ for some $j \in [1, p^k-2]$, then $T_j$ is zero-sum free with sum $-2f$ and $-f \notin \Sigma(T_j)$. 
The cross number is $\frac{j}{p^k}$ unless $p^k-j-1$ is divisible by $p$. If $p^k-j-1$ is a multiple of $p$, then $p^k-j-2$ is not and therefore $T_j'=f^{j-2}(2f)((p^k-j-2)f)$  has the required properties.
\end{proof}

\begin{proposition} \label{W(G_p + C_2^r)}
Let $G_p$ be a $p$-group for an odd prime $p$ and let $G=C_2^r \oplus G_p$ for some $r \in \mathbb{N}$.
\begin{enumerate}
\item If $r=1$, then
	
	\[   \frac{2}{\exp(G)}[1, \frac{\exp(G)}{2}\mathsf K^{\ast}(G) ]\subseteq     \mathsf W(G) \subseteq  \frac{2}{\exp(G)}[1, \frac{\exp(G)}{2}\mathsf K(G)]	\]	
In particular, if $\mathsf K(G)=\mathsf K^{\ast}(G)$ then equality holds and $\mathsf W(G)$ is an arithmetic progression with difference $\frac{2}{\exp(G)}$.	
\item If $r \geq 2$, then
	
	\[ \frac{1}{\exp(G)} [2, \exp(G) \mathsf K^{\ast}(G)] \subseteq \mathsf W(G) \subseteq \frac{1}{\exp(G)} [2, \exp(G) \mathsf K(G)]
	\]
	
	In particular, if $\mathsf K(G)=\mathsf K^{\ast}(G)$ then equality holds and thus $\mathsf W(G)$ is an arithmetic progression with difference $\frac{1}{\exp(G)}$ .	
\end{enumerate}	
\end{proposition}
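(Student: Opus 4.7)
The upper bounds in both parts are immediate from the inclusions recalled at the end of Section~\ref{sec_prel}. For $r=1$, $G = C_2 \oplus G_p$ has $2$-valuation $1$ in its exponent and contains no subgroup $C_2^2$ (because $G_p$ has odd order), so $\mathsf W(G) \subseteq \frac{2}{\exp(G)}[1,\frac{\exp(G)}{2}\mathsf K(G)]$; for $r \ge 2$, $G \supseteq C_2^2$ and only $\mathsf W(G) \subseteq \frac{1}{\exp(G)}[2,\exp(G)\mathsf K(G)]$ is available.

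For the lower bounds my plan is to mirror the argument used for Theorem~\ref{4.8}. Write $G_p = G_p' \oplus C_{p^k}$ with $G_p' = \bigoplus_{j=1}^{s-1} C_{p^{a_j}}$ and use $C_2 \oplus C_{p^k} \cong C_{2p^k}$ (since $p$ is odd). Decomposing $G = C_2^{r-1} \oplus G_p' \oplus C_{2p^k}$ and applying Proposition~\ref{W-C_n+C_q's} yields
\[
(r-1)\cdot\tfrac{1}{2}[0,1] \;+\; \sum_{j=1}^{s-1}\tfrac{1}{p^{a_j}}[0,p^{a_j}-1] \;+\; \mathsf W(C_{2p^k}) \;\subseteq\; \mathsf W(G),
\]
where $\mathsf W(C_{2p^k}) = \frac{2}{2p^k}[1,p^k\mathsf K^{\ast}(C_{2p^k})]$ by Theorem~\ref{w-2p^k}. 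In numerator units of $\frac{1}{\exp(G)}$ the contributions of $\mathsf W(C_{2p^k})$ and of each $\frac{1}{p^{a_j}}[0,p^{a_j}-1]$ are even integers, so iterating Lemma~\ref{addition} exactly as in Proposition~\ref{w-C_n+C_q's(1)} combines them into the full interval of even numerators $[2, M_E]$, where $M_E := 3p^k - 1 + 2\sum_{j=1}^{s-1}(p^k - p^{k-a_j})$. One verifies that $M_E = \exp(G)\mathsf K^{\ast}(G)$ when $r=1$, so part~(1) follows directly. In part~(2) each of the $r-1$ extra factors $\frac{1}{2}[0,1]$ contributes $\{0, p^k\}$ with $p^k$ odd, providing the odd shifts needed; a parity-aware iteration of Lemma~\ref{addition} (using $M_E > 2p^k$ so consecutive translates by $2p^k$ overlap within each parity class) covers most integers of $[2, M_E + (r-1)p^k] = [2, \exp(G)\mathsf K^{\ast}(G)]$.

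The remaining numerators can only lie in a short bottom range of odd values $[3, p^k]$ and, depending on the parity of $r$, in a corresponding short top range of the opposite parity, and must then be supplied by explicit constructions. For the small odd numerators $u = 2t+3 \in [3, p^k]$ with $t \in [0, (p^k-3)/2]$, I propose the sequence $(e_1, f)(e_2, f)(e_1+e_2, -(t+2)f)\cdot(0, f)^t$, with $e_1, e_2 \in C_2^r$ distinct nonzero and $f \in C_{p^k}\subseteq G_p$ of order $p^k$, and with $f$ in one factor replaced by $2f$ together with $-(t+2)f$ by $-(t+3)f$ in the exceptional case $p\mid t+2$: this is a minimal zero-sum sequence of cross number $\frac{u}{2p^k}$, because the only $C_2$-zero subsets of the first three elements are empty or all of them, and $(0, f)^t$ is zero-sum free in $C_{p^k}$. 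The corresponding top-range numerators are obtained by modifying the extremal minimal zero-sum sequence $T^{\ast}g^{\ast}$ realising $\mathsf K^{\ast}(G)$, for instance by replacing a pair $e_i e_j$ of order-$2$ elements by $e_i+e_j$ and inserting a single appropriately chosen element of order $p^k$ (or of order $p^{a_j}$ for some $j$) to rebalance the $G_p$-component. The main obstacle I foresee is verifying minimality for these top-range constructions, which are close to the Davenport length of $G$ and require careful case-by-case bookkeeping of potential zero-sum subsequences, in the spirit of the more elaborate constructions in the proof of Theorem~\ref{w-2p^k}.
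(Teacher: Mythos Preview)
Your argument for part~(1) is correct and is essentially the paper's proof: reduce via Proposition~\ref{W-C_n+C_q's} to the cyclic base $C_{2p^k}$ and invoke Theorem~\ref{w-2p^k}.

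For part~(2) your approach is genuinely different from the paper's, and it has a real gap precisely where you flag it. Working from the base $\mathsf W(C_{2p^k})$ (only even numerators) and shifting by the $(r-1)$-fold sumset $\{0,p^k,\dots,(r-1)p^k\}$ inevitably leaves a top segment uncovered: if $m=r-1$ is even you miss the odd numerators in $[M_E+(m-1)p^k+2,\,M_E+mp^k-1]$, and if $m$ is odd you miss the even numerators in the analogous range, in either case an interval of length roughly $p^k$ just below $\exp(G)\mathsf K^{\ast}(G)$. Your sketch for filling this (``replace a pair $e_ie_j$ by $e_i+e_j$ and insert an element of order $p^k$'') does not obviously give minimal zero-sum sequences with the required cross numbers, and you would need a family of constructions parametrised over this whole range---not just one or two ad hoc sequences. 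By contrast, your bottom-gap construction for odd numerators in $[3,p^k]$ is fine.

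The paper avoids the parity problem altogether by choosing a different base case. Instead of building up from $r=1$, it reduces (via the technique of Proposition~\ref{W-C_n+C_q's}) to $G=C_2^2\oplus C_{p^k}$ and proves the full inclusion $\frac{1}{2p^k}[2,2p^k\mathsf K^{\ast}(G)]\subseteq\mathsf W(G)$ for this group by direct construction: the families $A_j=e_1e_2(e_1+e_2+f)S_j$ and $A_j'=e_1e_2(e_1+f)(e_2+f)T_j$, with $S_j,T_j$ supplied by Lemma~\ref{cyc-p^k-0}, realise almost all numerators above $2p^k$, and two explicit sequences handle the remaining two values. Because the $r=2$ base already yields a progression with difference $\frac{1}{\exp(G)}$, extending to larger $r$ and to general $G_p$ via sumset arguments produces no parity gaps at all. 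If you want to salvage your route, the cleanest fix is to adopt this base case: prove the $r=2$, $G_p=C_{p^k}$ case directly (Lemma~\ref{cyc-p^k-0} is the tool you are missing), and then your Proposition~\ref{W-C_n+C_q's} reduction handles the rest without any leftover constructions.
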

\begin{proof}
The inclusions for $\mathsf{W}(G)$ are the well-known ones, see the end of Section \ref{sec_prel}.

It remains to show that $\mathsf{W}(G)$ contains the claimed elements. Assume $G_p= C_{p^{k_1}} \oplus  \dots \oplus C_{p^{k_l}}$ for some $k_i \in \mathbb{N}$, $i \in [1, l]$ and $k_1  \leq \dots \leq k_l$. By Proposition \ref{W-C_n+C_q's}, it suffices to show the results for $G=C_2^r \oplus C_{p^{k_l}}$ only. Therefore, from now on assume $G=C_2^r \oplus C_{p^k}$ for some $k \in \mathbb{N}$. 

The first assertion, then is precisely Theorem \ref{w-2p^k}.

We turn to the second assertion. We show the result for $r=2$ only, again the rest follows from Proposition \ref{W-C_n+C_q's}. Write
\[
G= C_2 \oplus C_2 \oplus C_{p^k} =  \langle e_1 \rangle \oplus \langle e_2 \rangle \oplus \langle f \rangle
\]
where $\ord(f)=p^k$ and $\ord(e_i)=2$ for $i\in [1,2]$.

Then $\mathsf K^{\ast}(G)= 2 \frac{1}{2} +\frac{p^k-1}{p^k}+ \frac{1}{2p^k}=2-\frac{1}{2p^k}$. Now,
 $\frac{1}{\exp(G)} [2, \exp(G) ] \subseteq \mathsf W(G)$ by \cite[Theorem 2]{Ch-Ge96}.
For $j \in [1, p^k-1]$, let $A_j=e_1e_2 (e_1+e_2+f)S_j \in \mathcal{A}(G)$ where $S_j \in \mathcal{A}^{\ast}(G)$ such that $\sigma(S_j)=-f$ and $\mathsf k(S_j)=\frac{j}{p^k}$, which exists by Lemma \ref{cyc-p^k-0}.
We have $\mathsf{k}(A_j) = 1 + \frac{1+2j}{2p^k}$ for $j \in [1,p^k-1]$. 

For $j \in [1, p^k-2]$, let $A_j'=e_1e_2 (e_1+f)(e_2+f)T_j \in \mathcal{A}(G)$ where $T_j \in \mathcal{A}^{\ast}(G)$ such that $\sigma(T_j)=-2f$, $-f \notin \Sigma (T_j)$,  and $\mathsf k(T_j)=\frac{j}{p^k}$, which exists by Lemma \ref{cyc-p^k-0}.

We have $\mathsf{k}(A_j') = 1 + \frac{2+2j}{2p^k}$ for $j \in [1,p^k-2]$. 

Whence $1+\frac{1}{2p^k}$ and $1+\frac{2}{2p^k} $ are the only cross numbers which are not yet realized by some atom in $G$ but 
 $S= e_1e_2 (e_1+f) (e_2-f) \in \mathcal{A}(G)$ with $ \mathsf k(S)=1+\frac{1}{2p^k}+\frac{1}{2p^k}$ and 
 $S=(e_1+e_2-f)(e_1+f)(e_2+f)f^{p^k-1} \in \mathcal{A}(G)$ with $\mathsf k(S)=1+\frac{1}{2p^k}$.

Thus we infer that $\frac{1}{2p^k}[2, 2p^k\mathsf K^{\ast}(G) ]\subseteq \mathsf W(G)$. 

The equality, under the assumption of $\mathsf K^{\ast}(G)= \mathsf K(G)$, is now obvious.  

\end{proof}

In order to have unconditional results of the above mentioned type we study $\mathsf{K}(G)$ for these types of groups, which also yields the result for $\mathsf{k}(G)$.  
We stress that for $\mathsf{k}(G)$ more general results are obtained in  \cite[Theorem 7]{Ki15a}, however we could not see how to obtain the result for $\mathsf{K}(G)$ from those results and thus present proofs even if they are quite similar.

\begin{proposition} \label{prop C_2+H}
Let $H$ be a finite abelian group of odd order. If $\mathsf K(H)=\mathsf K^{\ast}(H)$ and $\sum_{d \mid \exp(H)} \frac{1}{d} < 2$, then $\mathsf K(C_2 \oplus H) = \mathsf K^{\ast}(C_2 \oplus H)$.
\end{proposition}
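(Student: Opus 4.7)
The plan is to establish $\mathsf K(G) \leq \mathsf K^{\ast}(G)$; combined with the opposite inequality (which always holds), this yields equality. Fix $S \in \mathcal A(G)$ and decompose $S = S_0 S_1$, where $S_0$ collects the terms lying in $\{0\} \oplus H$ (identified with $H$) and $S_1 = f_1 \cdots f_{2m}$ collects the terms $f_i = (e, h_i)$ with $e$ the non-zero element of $C_2$; the length $|S_1|$ is necessarily even because the projection of $\sigma(S) = 0$ to $C_2$ equals $|S_1| \cdot e$.  If $m = 0$, then $S \in \mathcal A(H)$ and $\mathsf k(S) \leq \mathsf K(H) = \mathsf K^{\ast}(H) \leq \mathsf K^{\ast}(G)$ by the hypothesis on $H$, so we may assume $m \geq 1$.

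The central technical tool is the following \emph{pairing lemma}: for every partition $\pi$ of $\{1,\ldots,2m\}$ into pairs $\{i_k, j_k\}_{k=1}^m$, the sequence
\[
T_\pi := S_0 \cdot \prod_{k=1}^m (h_{i_k} + h_{j_k})
\]
is a minimal zero-sum sequence over $H$.  Indeed, $T_\pi$ is zero-sum in $H$ since the projection $G \to H$ sends $S$ to a zero-sum sequence; and any proper non-empty zero-sum subsequence $V$ of $T_\pi$ would lift to a proper zero-sum subsequence of $S$ by replacing each paired sum $h_{i_k}+h_{j_k} \in V$ with the corresponding two terms $f_{i_k} f_{j_k}$ in $S$, a lift which carries an even number of $S_1$-terms and therefore vanishes in the $C_2$-component, contradicting $S \in \mathcal A(G)$.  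Combined with $\mathsf K(H) = \mathsf K^{\ast}(H)$, this yields
\[
\mathsf k(T_\pi) = \mathsf k(S_0) + \sum_{k=1}^m \frac{1}{\ord(h_{i_k} + h_{j_k})} \leq \mathsf K^{\ast}(H)
\]
for every pairing $\pi$, while $\mathsf k(S_0) \leq \mathsf k^{\ast}(H)$ since $S_0$ is zero-sum free and the hypothesis $\mathsf K(H) = \mathsf K^{\ast}(H)$ forces $\mathsf k(H) = \mathsf k^{\ast}(H)$ as recalled in Section \ref{sec_prel}.

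I would conclude by induction on $m$.  The base case $m = 1$ is a direct computation: splitting into the subcases $h_1 = h_2 = 0$, exactly one $h_i$ zero, or both $h_i \neq 0$, the pairing-lemma bound $\mathsf k(S_0) \leq \mathsf K^{\ast}(H) - 1/\ord(h_1 + h_2)$ together with $1/\ord(h_i) \leq 1/p$ for $p \geq 3$ the smallest prime factor of $\exp(H)$ produces $\mathsf k(S) \leq \mathsf K^{\ast}(G)$ in every subcase.  For $m \geq 2$, one replaces a carefully chosen pair $f_i f_j$ in $S_1$ by the single element $(0, h_i+h_j) \in \{0\} \oplus H$, obtaining a sequence $S'$ that is again minimal zero-sum over $G$ (the verification is the same lifting argument used for the pairing lemma), with $|S_1'| = 2(m-1)$ and
\[
\mathsf k(S) - \mathsf k(S') = \frac{1}{2\ord(h_i)} + \frac{1}{2\ord(h_j)} - \frac{1}{\ord(h_i+h_j)}.
\]
The induction then closes as soon as one can always choose a pair for which this difference is non-positive; such a pair certainly exists whenever two of the $h_k$ share the same order, since then $\ord(h_i + h_j)$ divides $\ord(h_i) = \ord(h_j)$ and the difference collapses to a non-positive quantity.

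The main obstacle is therefore guaranteeing that a usable pair always exists, and this is precisely where the hypothesis $\sum_{d \mid \exp(H)} 1/d < 2$ should enter.  If all the orders $\ord(h_1), \ldots, \ord(h_{2m})$ are pairwise distinct, then the multi-set of their reciprocals is contained in $\{1/d : d \mid \exp(H)\}$, forcing $\sum_i 1/\ord(h_i) < 2$ and hence the $S_1$-contribution $\tfrac12 \sum_i 1/\ord(h_i)$ to $\mathsf k(S)$ is less than $1$; combined with the pairing-lemma bounds, which constrain $\mathsf k(S_0)$ or $m$, this should yield $\mathsf k(S) \leq \mathsf K^{\ast}(G)$ directly.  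Making the dichotomy between "many repeated orders, whence an equal-order pair exists" and "all orders distinct, whence $\sum 1/\ord$ is small" tight enough to cover every intermediate regime is the technical heart of the proof.
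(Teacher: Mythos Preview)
Your pairing lemma is correct and the equal-order replacement step is sound, but the proof has a genuine gap in the ``all orders distinct'' case, precisely when one of the $h_i$ equals $0$ (i.e.\ when the order-$2$ element $e$ lies in $\supp(S)$).  In that situation your direct bound gives only
\[
\mathsf k(S) \;=\; \mathsf k(S_0) + \tfrac12\sum_i \tfrac{1}{\ord(h_i)} \;<\; \mathsf k^{\ast}(H) + 1,
\]
which misses the target $\mathsf K^{\ast}(G)=\mathsf k^{\ast}(H)+\tfrac12+\tfrac{1}{2\exp(H)}$ by almost $\tfrac12$.  The vague appeal to ``pairing-lemma bounds, which constrain $\mathsf k(S_0)$ or $m$'' does not close this: no single pairing is shown to give a bound strong enough, and an averaging or optimal-pairing argument would require substantial additional work that you have not supplied.  (When all $h_i\neq 0$ and have distinct orders the bound $\mathsf k(S_1)<\tfrac12$ does suffice, so the obstruction really is the presence of $e$.)

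The paper's proof avoids this difficulty by arguing by contradiction and treating the case $e\in\supp(S)$ separately at the outset: one considers the projection $\pi\colon G\to G/\langle e\rangle\cong H$ applied to $Se^{-1}$.  Since $Se^{-1}$ still contains an element of even order, the cross number strictly increases under $\pi$, whence $\mathsf k(\pi(Se^{-1}))>\mathsf k(Se^{-1})\ge\mathsf K(H)$; thus $\pi(Se^{-1})$ is not a minimal zero-sum sequence, and lifting a proper zero-sum subsequence back contradicts the minimality of $S$.  Only after disposing of this case does the paper carry out the equal-order pair replacement (your idea), reducing to at most one element of each even order in $S_{e+H}$ and then invoking the hypothesis $\sum_{d\mid\exp(H)}1/d<2$ to get $\mathsf k(S_{e+H})<\tfrac12$ directly, without any induction.

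Your inductive framework and pairing lemma are more elaborate than necessary; the missing piece is exactly the projection argument for the order-$2$ case, and once you add it the remainder can be handled by a single direct estimate rather than an induction.
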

\begin{proof}
Let $C_2=\langle e \rangle$ with $\ord(e)=2$ and let $H$ be a finite abelian group of odd order with $\sum_{d \mid \exp(H)} \frac{1}{d} < 2$ and $\mathsf K^{\ast}(H)=\mathsf K(H)$, which implies in particular that $\mathsf k^{\ast}(H)=\mathsf k(H)$. Let $S \in \mathcal{A}(C_2 \oplus H)$ and assume for a contradiction that 
	\(\mathsf k(S) > \mathsf K^{\ast}(G)\).   
The condition on the cross number gives 	
\[ \mathsf k(S) \geq  \mathsf K^{\ast}(G)+\frac{1}{2\exp(H)}=\mathsf k(H)+ \frac{1}{2}+\frac{1}{\exp(H)}.\]
 
We distinguish two cases. 

\textbf{Case I:} There is an element, say $e$, in $\supp(S)$ with order $2$. 

In this case we have  $\mathsf k(Se^{-1})\geq  \frac{1}{\exp(H)} +\mathsf k(H)=\mathsf K(H)$. Consider $\pi:  C_2 \oplus H \to (C_2 \oplus H)/\langle e \rangle \cong H$. We note that  $\pi(Se^{-1})$ is a zero-sum sequence. 
Additionally, since the sum of $Se^{-1}$ is $e$, it necessarily contains an element of even order.
 
This implies that 
\[\mathsf k(\pi(Se^{-1})) > \mathsf k(Se^{-1})=\mathsf K(H)	,\]
which means $\pi(Se^{-1})$ is not a minimal zero-sum sequence. Yet this yields a contradiction to $S$ being a minmal zero-sum sequence, 
as $Se^{-1}$ has  a proper subsequence with sum $e$ or $0$, which together with $e$ yields a proper zero-sum sequence of $S$.

\textbf{Case II:} There is no element of order $2$ in  $\supp (S)$.  
Let us write \(  S=S_HS_{e+H} \) where $S_H \in \mathcal{F}(H)$ and $S_{eH} \in \mathcal{F}(e+H)$. 
Now for some $g_1g_2 \mid S_{e+H}$ with $\ord(g_1)=\ord(g_2)$, consider $g_1+g_2$. Then $\ord(g_1+g_2) \mid \frac{\ord(g_1)}{2}$, in particular $ \frac{1}{\ord(g_1)}+\frac{1}{\ord(g_2)} \leq \frac{1}{\ord(g_1+g_2)}$. Thus without loss, we can assume that $S_{e+H}$ contains at most $1$ element of each order as if there are two elements of the same order in $S_{e+H}$, we can replace the two by their sum, which maintains the property that the sequence is a minimal zero-sum sequence and does not decrease the cross number.
Thus we assume $S=S_HR$ where $R$ is a  sequence in $e+H$ which contains at most one element of each order. Then
	\[ \mathsf k(R) \leq \sum_{1\neq d \mid \exp(H)} \frac{1}{2d}= \frac{1}{2}\sum_{1\neq d \mid \exp(H)} \frac{1}{d}  \]
	and	
	\[
	\begin{aligned}
		\mathsf k(S_H) & =\mathsf k(S)-\mathsf k(R)\\
		& \geq \mathsf k(H)+ \frac{1}{\exp(H)} +\frac{1}{2} -\mathsf k(R)
	\end{aligned}
	\]
	Now $\sum_{1\neq d \mid \exp(H)} \frac{1}{d} \leq 1$ implies that
	
	\[
	\mathsf k(S_H) \geq \frac{1}{\exp (H)} +\mathsf k(H)
	\]
	which implies  that $S_H$ is not a zero-sum free sequence in $H$. 
The only way how this does not contradict the fact that $S$ is a minimal zero-sum sequence is that $S=S_H$ that is $R$ is trivial.  
Yet, in this case $\mathsf k(S_H) \geq \mathsf k(H)+ \frac{1}{\exp(H)} +\frac{1}{2} > \mathsf k(H)+ \frac{1}{\exp(H)} $, and thus $S$ is not a minimal zero-sum sequence in $C_2 \oplus H$, which again shows that it is not a minimal zero-sum sequence.  
\end{proof}

\smallskip
	For any $p$-group $H=G_p$,  Proposition \ref{prop C_2+H} implies that $\mathsf K^{\ast}(C_2 \oplus G_p) =\mathsf K(C_2 \oplus G_p)$.

\smallskip
\begin{theorem} \label{K(G_p+C_2^2)}
	Let $G=C_2^2 \oplus G_p$ where $G_p$ is a $p$-group for some odd prime $p$. Then $\mathsf K^{\ast}(G)=\mathsf K(G)$.
\end{theorem}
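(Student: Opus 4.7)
The plan is to mimic the argument of Proposition \ref{prop C_2+H}, but with two order-$2$ generators instead of one, so that $G$ has three non-trivial cosets of $G_p$ rather than just one. Set $n=\exp(G_p)$ and $C_2^2=\langle e_1\rangle\oplus\langle e_2\rangle$; since $G_p$ is a $p$-group, $\mathsf k(G_p)=\mathsf k^{\ast}(G_p)=:\mathsf k^{\ast}$, and a direct computation gives $\mathsf K^{\ast}(G)=1+\mathsf k^{\ast}+\frac{1}{2n}$. I will invoke Proposition \ref{prop C_2+H} with $H=G_p$ (valid because $\sum_{d\mid p^k}\frac{1}{d}\le\frac{p}{p-1}\le\frac{3}{2}<2$ for odd $p\ge 3$) to conclude $\mathsf K(C_2\oplus G_p)=\mathsf K^{\ast}(C_2\oplus G_p)=\frac{1}{2}+\mathsf k^{\ast}+\frac{1}{2n}$. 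The proof then proceeds by contradiction: assume $S\in\mathcal A(G)$ with $\mathsf k(S)>\mathsf K^{\ast}(G)$, so that denominator considerations yield $\mathsf k(S)\ge 1+\mathsf k^{\ast}+\frac{1}{n}$, and split into two cases according to whether $\supp(S)$ contains an element of order $2$.

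In the first case some $\epsilon\in\{e_1,e_2,e_1+e_2\}$ divides $S$, and I essentially repeat Case I of Proposition \ref{prop C_2+H} using this $\epsilon$. The key step is the projection $\pi\colon G\to G/\langle\epsilon\rangle\cong C_2\oplus G_p$: minimality of $S$ together with $\mathsf k(S)>1$ rules out $\epsilon^2\mid S$, so $\pi$ sends every element of $S\epsilon^{-1}$ to a non-zero element and thus $\mathsf k(\pi(S\epsilon^{-1}))\ge\mathsf k(S\epsilon^{-1})=\mathsf k(S)-\frac{1}{2}>\mathsf K(C_2\oplus G_p)$. Hence $\pi(S\epsilon^{-1})$ is a non-trivial zero-sum sequence that fails to be minimal, yielding a proper non-trivial zero-sum subsequence $\pi(T)$ with $T\mid S\epsilon^{-1}$; since $\sigma(T)\in\ker\pi=\{0,\epsilon\}$, either $T$ itself or $T\epsilon$ is a proper non-trivial zero-sum subsequence of $S$, contradicting minimality.

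In the second case $S$ contains no element of order $2$, so I will decompose $S=S_0S_1S_2S_3$ by cosets of $G_p$, with $S_0\in\mathcal F(G_p)$ and $S_i$ supported in the $i$-th non-trivial coset. Borrowing the replacement trick from Case II of Proposition \ref{prop C_2+H}, whenever two elements $g,g'\in S_i$ have the same order $2d$ (with $d\mid n$, $d>1$), I replace the pair $gg'$ by their sum $g+g'\in G_p$, whose order divides $d$; the new sequence is still a minimal zero-sum sequence and has cross number at least as large. Iterating until no repetitions of orders remain within $S_1$, $S_2$, $S_3$, I may assume each $S_i$ (for $i\in\{1,2,3\}$) contains at most one element of each order $2d$, and so
\[ \mathsf k(S_1)+\mathsf k(S_2)+\mathsf k(S_3)\le\frac{3}{2}\sum_{\substack{d\mid n\\ d>1}}\frac{1}{d}\le\frac{3}{2}\cdot\frac{1}{p-1}\le\frac{3}{4}, \]
where the last inequality uses $p\ge 3$. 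Together with $\mathsf k(S)\ge 1+\mathsf k^{\ast}+\frac{1}{n}$ this gives $\mathsf k(S_0)>\mathsf k^{\ast}$, while minimality of $S$ forces $S_0$ to be zero-sum free in $G_p$, hence $\mathsf k(S_0)\le\mathsf k(G_p)=\mathsf k^{\ast}$, the desired contradiction.

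The main obstacle I expect is executing the replacement step in Case II rigorously: I must verify both that the replacement preserves the minimal-zero-sum property (a short case split on whether a supposed proper zero-sum subsequence of the new sequence contains $g+g'$ or avoids it, lifting back in each situation to produce a proper zero-sum subsequence of $S$) and that $g+g'\ne 0$, since otherwise the replacement would insert the element $0$. The non-vanishing is automatic from minimality of $S$: if $g+g'=0$ then $gg'$ would itself be a proper non-trivial zero-sum subsequence of $S$ of length two, which is impossible because $\mathsf k(S)>1$ forces $|S|>2$.
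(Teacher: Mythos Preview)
Your argument is correct and follows essentially the same approach as the paper's proof. The only difference is organizational: the paper splits into three cases according to whether $\supp(S)$ contains two, exactly one, or no elements of order~$2$ (projecting modulo $C_2^2$ in the first and modulo $C_2$ in the second), whereas you merge the first two into a single case by always projecting modulo one chosen order-$2$ element and invoking Proposition~\ref{prop C_2+H}; your Case~2 and the paper's Case~III are handled identically via the same replacement trick and the same estimate $\frac{3}{2}\sum_{j\ge 1}p^{-j}<1$.
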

\begin{proof}
	Assume $\exp (G_p)=p^k$ for some $k \in \mathbb{N}$ so $\exp(G)=2p^k$ and $\mathsf K^{\ast}(G)=1+\mathsf k^{\ast}(G_p) +\frac{1}{2p^k} $. We know $\mathsf K^{\ast}(G) \leq \mathsf K(G)$. Assume $S \in \mathcal{A}(G)$ with $\mathsf k(S) > \mathsf K^{\ast}(G)$. We must show that no such $S$ exists. We have $\mathsf k(S) \geq \mathsf K^{\ast}(G)+\frac{1}{2p^k}= \mathsf k(G_p)+\frac{1}{p^k}+1$. We distinguish three cases.

	\textbf{Case I:} $\supp(S)$ contains two elements of order $2$, say $e_1$ and $e_2$. Let $C_2^2= \langle e_1, e_2 \rangle$. Then
	
	\[ \mathsf k(S(e_1e_2)^{-1}) \geq \mathsf k^{\ast}(G_p)+\frac{1}{p^k}=\mathsf K^{\ast}(G_p) =\mathsf K(G_p)
	\]
	Now let $\pi: G \to G/\langle e_1, e_2 \rangle \cong G_p$ be the canonical epimorphism then $\pi (S(e_1e_2)^{-1}) \in \mathcal{A}(G/\langle e_1, e_2 \rangle)$ but
	
	\[   \mathsf k(\pi(S(e_1e_2)^{-1}))  > \mathsf k(S(e_1e_2)^{-1}) \geq \mathsf K(G_p)
	\]
because there is at least one element in $S(e_1e_2)^{-1}$ of even order, therefore $\pi(S(e_1e_2)^{-1}) > \mathsf K(G_p)$ contradicting our assumption.

	\textbf{Case II:} $\supp( S)$ contains exactly one element of order $2$, say $e$. Let $\pi_1: G \to G/\langle e \rangle \cong G_p \oplus C_2$ be the canonical epimorphism then $\pi_1 (Se^{-1}) \in \mathcal{A}(G/\langle e \rangle)$ but
	\[
	\begin{aligned}
		\mathsf k(\pi_1 (Se^{-1})) &> \mathsf k(Se^{-1})\\
		&> \mathsf K^{\ast}(G)-\frac{1}{2}\\
		&=\mathsf K^{\ast}(G/\langle e \rangle)\\
		&= \mathsf K(G/ \langle e \rangle)
	\end{aligned}
	\]
	the last equality is due to Proposition \ref{prop C_2+H}, a contradiction.

	\textbf{Case III:} $\supp(S)$ does not contain an element of order $2$. \\
	Now set $S= \prod_{i=1}^{k} S_{p^i} \prod_{j=1}^{k}S_{2p^j}$ where $S_{p^i}$ consists of elements of order $p^i$ for $i \in [1, k]$ and $S_{2p^j}$ consists of elements of order $2p^j$ for $j \in [1,k]$. Note that without loss we can assume $|S_{2p^j}| \leq 3$ for all $j \in [1,k]$ as we can replace two elements of even order whose sum is not of even order by their sum (compare this to the argument in Case II of the proof of Proposition \ref{prop C_2+H}). Then
	
	\[\begin{aligned}
		\mathsf k(\prod_{i=1}^{k} S_{p^i}) &=\mathsf k(S)- \mathsf k(\prod_{j=1}^{k}S_{2p^j})\\
		& \geq \mathsf k(S)- \frac{3}{2} \sum_{j=1}^k p^{-j} \\
		& > \mathsf k(S)- \frac{3}{2} \frac{1}{p-1}\\
		& > \mathsf k(S)-1 \\
		& \geq \mathsf k(G_p)+ \frac{1}{p^k}\\
		&=\mathsf K(G_p)	
	\end{aligned}\]
A contradiction to $S \in \mathcal{A}(G)$ hence the assertion follows.
	
\end{proof}

\begin{corollary}  
Let $G_p$ be a $p$-group for an odd prime $p$ and let $G=C_2^r \oplus G_p$ for some $r \in \mathbb{N}$.
\begin{enumerate}
\item If $r=1$, then	\[  \mathsf W(G) =  \frac{2}{\exp(G)}[1, \frac{\exp(G)}{2}\mathsf K^{\ast}(G) ] .\]	 
\item If $r = 2$, then \[\mathsf W(G) = \frac{1}{\exp(G)} [2, \exp(G) \mathsf K^{\ast}(G)] .\]
\end{enumerate}	
\end{corollary}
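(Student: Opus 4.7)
My plan is to deduce each of the two assertions by combining the corresponding part of Proposition \ref{W(G_p + C_2^r)} with an unconditional equality of the form $\mathsf{K}(G) = \mathsf{K}^{\ast}(G)$ for the group at hand. Proposition \ref{W(G_p + C_2^r)} already sandwiches $\mathsf{W}(G)$ between an arithmetic progression up to $\mathsf{K}^{\ast}(G)$ and one up to $\mathsf{K}(G)$, so all that remains is to match the two endpoints.

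For $r=1$ I will invoke Proposition \ref{prop C_2+H} with $H = G_p$. Its two hypotheses are easy to verify: the equality $\mathsf{K}(G_p) = \mathsf{K}^{\ast}(G_p)$ is standard for $p$-groups (cited already in the preliminaries), and the bound $\sum_{d \mid \exp(G_p)} \frac{1}{d} < 2$ follows because, writing $\exp(G_p) = p^k$ with $p \ge 3$, we have
\[
\sum_{d \mid p^k} \frac{1}{d} = \sum_{i=0}^{k} p^{-i} \le \sum_{i=0}^{\infty} 3^{-i} = \frac{3}{2} < 2.
\]
This is precisely the content of the remark following Proposition \ref{prop C_2+H} in the paper, so the matching endpoint is available and the equality in part~(1) follows from Proposition \ref{W(G_p + C_2^r)}(1).

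For $r=2$ the required equality $\mathsf{K}^{\ast}(G) = \mathsf{K}(G)$ is exactly Theorem \ref{K(G_p+C_2^2)}. Substituting this into the upper bound of Proposition \ref{W(G_p + C_2^r)}(2) collapses the two bounds and yields part~(2).

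Since both parts reduce to quoting already-proved results, there is essentially no obstacle; the only small thing to check is the arithmetic estimate on $\sum_{d \mid \exp(G_p)} \frac{1}{d}$ invoked in Proposition \ref{prop C_2+H}, and this is immediate from $p \ge 3$.
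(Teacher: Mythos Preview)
Your proof is correct and follows exactly the same approach as the paper, which simply cites Propositions \ref{W(G_p + C_2^r)} and \ref{prop C_2+H} together with Theorem \ref{K(G_p+C_2^2)}. Your explicit verification of the divisor-sum hypothesis for Proposition \ref{prop C_2+H} is a welcome detail that the paper relegates to the remark following that proposition.
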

\begin{proof}
This follows directly from Propositions \ref{W(G_p + C_2^r)}, \ref{prop C_2+H} and Theorem \ref{K(G_p+C_2^2)}.
\end{proof}

The second point yields a class of groups, other than $p$-groups of odd order for which $\mathsf{W}(G)$ is an arithmetic progressions with difference $\frac{1}{\exp(G)}$. This problem was raised in  \cite[Remark 5]{Ch-Ge96}.

\begin{corollary}  
Let $G_p$ be a $p$-group for an odd prime $p$ and let $G=C_2^r \oplus G_p$ for some $r \in \mathbb{N}$.
 If $r \le 2$, then \[\mathsf w(G) = \frac{1}{\exp(G)} [1, \exp(G) \mathsf k^{\ast}(G)] .\]	
\end{corollary}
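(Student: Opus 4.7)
The plan is to reduce the corollary entirely to the machinery already assembled in the paper. Proposition \ref{w(G_p + C_2^r)} already provides both the inclusion $\frac{1}{\exp(G)}[1, \exp(G)\mathsf{k}^{\ast}(G)] \subseteq \mathsf{w}(G)$ and the reverse inclusion $\mathsf{w}(G) \subseteq \frac{1}{\exp(G)}[1, \exp(G)\mathsf{k}(G)]$, together with the statement that equality holds as soon as $\mathsf{k}(G) = \mathsf{k}^{\ast}(G)$. Hence the whole task is to verify the equality $\mathsf{k}(C_2^r \oplus G_p) = \mathsf{k}^{\ast}(C_2^r \oplus G_p)$ for $r \in \{1,2\}$.

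To this end I would use the general implication recalled in Section \ref{sec_prel}: whenever $\mathsf{K}(G) = \mathsf{K}^{\ast}(G)$ holds, one automatically obtains $\mathsf{k}(G) = \mathsf{k}^{\ast}(G)$. So it is enough to check the large-cross-number identity for each value of $r$.

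For $r = 2$ this is exactly Theorem \ref{K(G_p+C_2^2)}, so there is nothing more to prove in that case. For $r = 1$ I would apply Proposition \ref{prop C_2+H} with $H = G_p$. The hypothesis $\mathsf{K}(G_p) = \mathsf{K}^{\ast}(G_p)$ holds because $G_p$ is a $p$-group, and the hypothesis $\sum_{d \mid \exp(G_p)} \tfrac{1}{d} < 2$ is a one-line check: writing $\exp(G_p) = p^k$ with $p$ an odd prime, the divisors of $\exp(G_p)$ form a geometric chain and
\[
\sum_{i=0}^{k} \frac{1}{p^i} \;<\; \sum_{i=0}^{\infty} \frac{1}{p^i} \;=\; \frac{p}{p-1} \;\le\; \frac{3}{2} \;<\; 2.
\]
Proposition \ref{prop C_2+H} then yields $\mathsf{K}(C_2 \oplus G_p) = \mathsf{K}^{\ast}(C_2 \oplus G_p)$, and the desired identity for $\mathsf{k}$ follows.

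I do not expect any genuine obstacle: every substantive ingredient has been established earlier in the paper, and the corollary is essentially a packaging step that combines Proposition \ref{w(G_p + C_2^r)} with the $\mathsf{K}$-identities supplied by Proposition \ref{prop C_2+H} and Theorem \ref{K(G_p+C_2^2)}. The only place where any calculation at all is required is the elementary divisor-sum bound used to apply Proposition \ref{prop C_2+H} in the case $r = 1$.
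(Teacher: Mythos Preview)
Your proposal is correct and matches the paper's own proof essentially verbatim: the paper simply cites Proposition~\ref{w(G_p + C_2^r)}, Proposition~\ref{prop C_2+H}, and Theorem~\ref{K(G_p+C_2^2)}, which is precisely the combination you describe. Your added verification of the divisor-sum hypothesis for Proposition~\ref{prop C_2+H} is fine and in fact makes explicit a check the paper records just after that proposition.
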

\begin{proof}
This follows directly from Propositions \ref{w(G_p + C_2^r)}, \ref{prop C_2+H} and Theorem \ref{K(G_p+C_2^2)}.
\end{proof} 

We end this section with a reflection of the problem to which extent the sets $\mathsf{W}(G)$ and $\mathsf{w}(G)$ determine the structure of $G$.  
Note that if $|G|, |G'| \in \{1,2\}$, then $\mathsf W(G)=\mathsf W(G')=\{1\}$. 

\begin{proposition} \label{exp=exp}
Let $G$ and $G'$ be finite abelian groups.	
\begin{enumerate}
		\item Let $|G|, |G'| \geq 3$. If $\mathsf W(G)=\mathsf W(G')$ then $\exp(G)=\exp(G')$ and $\mathsf K(G)=\mathsf K(G')$.
		\item Let $G$ be a $p$-group for some odd prime $p$. We have $\mathsf W(G)= \mathsf W(G')$ if and only if $\exp (G)=\exp (G')$ and $\mathsf K(G)=\mathsf K(G')$. 
		\item 	Let $G$ be a $p$-group for some prime $p$. We have $\mathsf w(G)=\mathsf w(G')$ if and only if $\exp(G)=\exp(G')$ and $\mathsf k(G)=\mathsf k(G')$.
	\end{enumerate}
\end{proposition}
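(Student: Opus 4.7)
The plan is to recover the invariants $\exp(G)$ and $\mathsf{K}(G)$ (respectively $\mathsf{k}(G)$) directly from the cross number set via its extremes, and then, for the converse directions of Parts (2) and (3), to invoke the complete descriptions for $p$-groups given by Theorem~\ref{3.2} and Corollary~\ref{w-p groups}.

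For Part~(1), the identity $\mathsf{K}(G)=\max \mathsf{W}(G)$ is immediate, so I would concentrate on showing $\min \mathsf{W}(G)=\tfrac{2}{\exp(G)}$ whenever $|G|\ge 3$. The lower bound splits by the length of a minimal zero-sum sequence $S$: if $|S|=1$ then $S=0$ and $\mathsf{k}(S)=1\ge \tfrac{2}{\exp(G)}$ (since $\exp(G)\ge 2$), while if $|S|\ge 2$ then all its terms are non-zero and $\mathsf{k}(S)\ge\tfrac{2}{\exp(G)}$. The matching value $\tfrac{2}{\exp(G)}$ is realised by $g(-g)$ when $\exp(G)\ge 3$ (taking $g$ of order $\exp(G)$, so that $-g\ne g$), and by $gg$ when $\exp(G)=2$ (which, combined with $|G|\ge 3$, forces $G$ to contain an element of order~$2$). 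Since $n\mapsto 2/n$ is injective, this recovers $\exp(G)$.

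For the forward direction of Part~(2), I note that any non-trivial odd $p$-group has $|G|\ge p\ge 3$ and, by Theorem~\ref{3.2}(1), $|\mathsf{W}(G)|\ge 2$; hence $\mathsf{W}(G)=\mathsf{W}(G')$ forces $|G'|\ge 3$ and Part~(1) applies. For the forward direction of Part~(3), I would use $\mathsf{k}(G)=\max\mathsf{w}(G)$ together with $\min\mathsf{w}(G)=\tfrac{1}{\exp(G)}$, which follows at once from Lemma~\ref{w(k<=1)} and the inclusion $\mathsf{w}(G)\subseteq\tfrac{1}{\exp(G)}\mathbb{N}$; the boundary case $|G|=1$ is immediate, since then $\mathsf{w}(G)=\emptyset$ forces $G'=\{0\}$.

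The converse directions rest on the key observation that if $G$ is a $p$-group and $\exp(G')=\exp(G)=p^k$, then every element of $G'$ has order dividing $p^k$, hence a $p$-power, so $G'$ is itself a $p$-group. For Part~(2), Theorem~\ref{3.2}(1) (valid since $p$ is odd) then gives
\[
\mathsf{W}(G)=\tfrac{1}{\exp(G)}[2,\exp(G)\mathsf{K}(G)]=\tfrac{1}{\exp(G')}[2,\exp(G')\mathsf{K}(G')]=\mathsf{W}(G'),
\]
and for Part~(3), Corollary~\ref{w-p groups} analogously gives $\mathsf{w}(G)=\tfrac{1}{\exp(G)}[1,\exp(G)\mathsf{k}(G)]=\mathsf{w}(G')$. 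There is no conceptual obstacle, only mild bookkeeping: the case split $\exp(G)=2$ versus $\exp(G)\ge 3$ in Part~(1), and the trivial group case in Parts~(2)--(3).
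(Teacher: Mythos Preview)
Your proof is correct and follows essentially the same approach as the paper: recover $\exp(G)$ and $\mathsf{K}(G)$ (resp.\ $\mathsf{k}(G)$) from $\min$ and $\max$ of the cross-number set, then for the converse use that $\exp(G')=p^k$ forces $G'$ to be a $p$-group and apply Theorem~\ref{3.2} / Corollary~\ref{w-p groups}. You are in fact slightly more careful than the paper, supplying the verification that $\min\mathsf W(G)=2/\exp(G)$ (with the $\exp(G)=2$ case) and explicitly checking $|G'|\ge 3$ before invoking Part~(1).
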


\begin{proof}
	1. Let $\mathsf W(G)=\mathsf W(G')$. Since $\min \mathsf W(G)=\frac{2}{\exp(G)}$ and $\min \mathsf W(G')=\frac{2}{\exp(G')}$, we get from $\mathsf W(G)=\mathsf W(G')$ that $\exp(G)=\exp(G)$. Similarly, since $\max \mathsf W(G)=\mathsf K(G)$ and $\max \mathsf W(G)=\mathsf K(G)$, we get $\mathsf K(G)=\mathsf K(G)$.

	2. Assume first that $\exp(G)= p^k$ for some $k \in \mathbb{N}$. We only need to show the reverse implication of 1. and then the assertion is complete. If $\exp (G)=\exp(G')$, then $\exp(G')=p^k$ and $G'$ is also a $p$-group. Now let $\mathsf K(G)= \mathsf K(G')$. Then by Theorem \ref{3.2}, every possible value between $\min \mathsf W(G)$ and $\max \mathsf W(G)$ can be realized by a minimal zero-sum sequence over $G$, therefore $\min \mathsf W(G) =\frac{2}{\exp (G)}= \min \mathsf W(G')$ and $\max W(G)=\mathsf K(G)=\max \mathsf W(G')$ whence $\mathsf W(G)=\mathsf W(G')$.

	3. 	Suppose $\mathsf w(G)=\mathsf w(G')$. Then $\frac{1}{\exp(G)}=\min \mathsf w(G)=\min \mathsf w(G')=\frac{1}{\exp(G')}$ whence $\exp(G)=\exp(G')$. Moreover, we also get $\mathsf k(G)=\max\mathsf w(G) =\max \mathsf w(G)=\mathsf k(G')$. Conversely, suppose $\exp(G)=p^k$ for some $k \geq 1$ and let $p^k=\exp(G)=\exp(G')$ then $G'$ is also a $p$ group having the same exponent as $G$. Now if $\mathsf k(G)=\mathsf k(G')$ too, then Corollary \ref{w-p groups} implies that $\mathsf w(G)=\mathsf w(G')$.
\end{proof}

The following remark highlights some examples that complement the preceding result.  
\begin{remark} \ 
\begin{enumerate}	
		\item Let $p=2$ and $G_p = C_{p^{k_1}} \oplus \ldots \oplus C_{p^{k_r}}$  with $1  \le k_1 \le \ldots \le k_r$. Let $G=G_p$ with $k_{r-1}=k_r$ and $G'=G_p$ with $k_{r-1} \neq k_r$. Then $\exp(G)=\exp(G')$ and for suitable choices of $r$ and the $k_i$ it is possible to have $\mathsf K(G)=\mathsf K(G')$, too. Indeed, for instance take $G=C_4^3$ and $G'=C_2^3\oplus C_4$ then $\exp(G)=\exp(G')=4$ and $\mathsf K(G)=\mathsf K(G')=\frac{10}{4}$. but in any case, Theorem \ref{3.2} tells that $\mathsf W(G) \neq \mathsf W(G')$. Thus the Proposition \ref{exp=exp}.2 only works for odd primes.
		\item For some $p$-groups $G, G'$, the group rank can still differ even if $\exp (G)=\exp (G')$, $\mathsf K(G)=\mathsf K(G')$ and $\mathsf W(G)= \mathsf W(G')$. For instance, if $G=C_3^4 \oplus C_9$ and $G'=C_9^4$ then $\mathsf W(G)= \mathsf W(G')$ by Theorem \ref{3.2} but $G$ and $G'$ have different rank. Therefore, $\exp (G)=\exp (G')$, $\mathsf K(G)=\mathsf K(G')$ and $\mathsf W(G)= \mathsf W(G')$ does not  determine the structure of the group.	
		\end{enumerate}
\end{remark}

\section{Gap structure in $ \mathsf W(C_p^r\oplus C_q^s)$}

The goal of this section is to highlight the fact that there are gaps in the set of cross numbers for the group  $G= C_p^r  \oplus C_q^s$ with $p> q$ odd primes and $r, s \in \mathbb{N}$ at least if $p$ is large relative to $q$. Note that $r=s=1$ is studied in detail in \cite{B-C-M-P04}.

Since it is needed in this section we recall that for a finite abelian group $G$, the constant $\eta(G)$ is defined to be the smallest integer $t$ such that every sequence $S$ over $G$ with $|S| \geq t$ has zero-sum subsequence $T$ with length $|T| \in [1, \exp(G)]$, such a subsequence is called short. It is easy to see that this constant is finite, for example $\exp(G)|G|$ is a trivial upper bound. 
In general, the exact value is unknown. By definition, we have $\mathsf D(G) \leq \eta (G)$. The exact value of $\eta(G)$ is known for groups of rank at most two and it is known that $\eta(G)\le |G|$ always holds, we refer to \cite[Chapter 5]{Ge-HK06} for these and other results. 

The main results are the two corollaries at the end. We start with a few technical results. 
The following lemma is useful to establish a link between $\mathsf{w}(G)$ and $\mathsf{W}(G)$. 
\begin{lemma} \label{large order}
Let $G=C_p^r \oplus C_q^s$ for $r,s \in \mathbb{N}$ and $p >q$ be primes. Let $S \in \mathcal A(G )$ such that
	\[ \mathsf k(S) > \max\{  \frac{1+r(p-1)}{p}, \frac{1+s(q-1)}{q}  \}.\]
Then $S$ contains an element of order $pq$. 	
\end{lemma}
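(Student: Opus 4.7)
The plan is to argue by contradiction. Assume that $S$ contains no element of order $pq$. Since $\exp(G)=pq$ and $S$ is a non-trivial minimal zero-sum sequence, every term of $S$ has order dividing $pq$, and the trivial element $0$ cannot appear. So every term has order exactly $p$ or exactly $q$.

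Now I would exploit the fact that elements of order $p$ lie in the $p$-Sylow subgroup $G_p \cong C_p^r$ and elements of order $q$ lie in the $q$-Sylow subgroup $G_q \cong C_q^s$, and that $G = G_p \oplus G_q$ with $G_p \cap G_q = \{0\}$. Write $S = S_p S_q$ where $S_p$ collects the terms of order $p$ and $S_q$ the terms of order $q$. Then
\[
0 = \sigma(S) = \sigma(S_p) + \sigma(S_q),
\]
and since the two summands belong to $G_p$ and $G_q$ respectively, each must vanish individually. Thus both $S_p$ and $S_q$ are themselves zero-sum sequences.

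Minimality of $S$ now forces one of $S_p, S_q$ to be the trivial sequence, since otherwise $S_p$ would be a proper non-trivial zero-sum subsequence of $S$. In the case $S_q = 1$, the sequence $S$ lives entirely in $G_p$ and is a minimal zero-sum sequence there, so $|S| \le \mathsf{D}(C_p^r) = 1 + r(p-1)$, giving $\mathsf{k}(S) = |S|/p \le (1+r(p-1))/p$. Symmetrically, if $S_p = 1$ we get $\mathsf{k}(S) \le (1+s(q-1))/q$. Either bound contradicts the hypothesis, so an element of order $pq$ must occur.

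There is no real obstacle here; the argument is essentially the standard splitting of a minimal zero-sum sequence over $C_p^r \oplus C_q^s$ into its Sylow components, combined with the known value of the Davenport constant of an elementary $p$-group. The only minor care required is noting at the outset that $0 \notin \supp(S)$ under the mild assumption that $S$ is non-trivial (which is implicit in $S \in \mathcal{A}(G)$).
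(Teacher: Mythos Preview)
Your proof is correct and follows essentially the same approach as the paper: the paper's two-line argument asserts that if $S$ has no element of order $pq$ then it consists entirely of elements of order $p$ or entirely of elements of order $q$, and then the cross-number bound gives the contradiction. You have simply made explicit the two steps the paper leaves implicit, namely the Sylow-splitting argument showing why minimality forces $S_p$ or $S_q$ to be trivial, and the use of $\mathsf{D}(C_p^r)=1+r(p-1)$ (equivalently $\mathsf{K}(C_p^r)=\frac{1+r(p-1)}{p}$) to bound $\mathsf{k}(S)$.
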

\begin{proof}
Assume to the contrary that $S$ does not contain an element of order $pq$. Then $S$ contains either only elements of oder $p$ or element of order $q$. 
This contradicts the assumption on the cross number.  
\end{proof}

The following result allows to focus on $\mathsf{w}(G)$ when trying to show the existence of gaps.  

\begin{proposition} \label{connection between W and w}
Let $G=C_p^r \oplus C_q^s$ for $r,s \in \mathbb{N}$ and $p >q$ be primes. 
Then $\mathsf{W}(G) \subseteq  \frac{1}{pq}+ \mathsf{w}(G)$. 	
\end{proposition}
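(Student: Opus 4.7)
The plan is to define, for every minimal zero-sum sequence $S$ over $G = C_p^r \oplus C_q^s$, an explicit non-trivial zero-sum free sequence $T$ over $G$ satisfying $\mathsf{k}(T) = \mathsf{k}(S) - \frac{1}{pq}$. Since $\exp(G) = pq$, each nonzero element of $G$ has order in $\{p, q, pq\}$. The first step is a structural observation: if $S$ contains no element of order $pq$ and $S \neq (0)$, write $S = S_p S_q$ by element orders; then $\sigma(S) = 0$ decomposes along the internal direct sum $C_p^r \oplus C_q^s$ into $\sigma(S_p) = 0$ in $C_p^r$ and $\sigma(S_q) = 0$ in $C_q^s$ separately, and minimality of $S$ forces one of $S_p, S_q$ to be trivial. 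This yields four exhaustive cases: (i) $S = (0)$; (ii) $S$ contains an element of order $pq$; (iii) $\supp(S) \subseteq C_p^r$ with $S \neq (0)$; (iv) $\supp(S) \subseteq C_q^s$ with $S \neq (0)$.

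Cases (i) and (ii) are quick. In (i), $\mathsf{k}(S) = 1$, and $1 - \frac{1}{pq} = \frac{pq-1}{pq}$ lies in $\mathsf{w}(G)$ by Lemma \ref{w(k<=1)}. In (ii), pick any $g \in \supp(S)$ with $\ord(g) = pq$; then $T := Sg^{-1}$ is a proper subsequence of the minimal zero-sum sequence $S$, hence zero-sum free, non-trivial (because $|S| \geq 2$), and $\mathsf{k}(T) = \mathsf{k}(S) - \frac{1}{pq}$.

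The main step is case (iii); case (iv) follows by swapping the roles of $p$ and $q$. Pick any $g \in \supp(S)$ and set $T_0 := Sg^{-1}$, a non-trivial zero-sum free sequence over $C_p^r$ with $\mathsf{k}(T_0) = \mathsf{k}(S) - \frac{1}{p}$. Using $r, s \geq 1$, choose $h = h_p + h_q \in G$ with $h_p \in C_p^r \setminus \{0\}$ and $h_q \in C_q^s \setminus \{0\}$, so that $\ord(h) = pq$, and set $T := T_0 \cdot h^{q-1}$. A direct computation gives
\[
\mathsf{k}(T) = \mathsf{k}(T_0) + \frac{q-1}{pq} = \mathsf{k}(S) - \frac{1}{p} + \frac{q-1}{pq} = \mathsf{k}(S) - \frac{1}{pq}.
\]
For zero-sum-freeness, every non-trivial subsequence of $T$ has the form $T' \cdot h^k$ with $T' \mid T_0$ and $k \in [0, q-1]$; projecting its sum to $C_q^s$ yields $k h_q$, which vanishes only for $k = 0$, and then $\sigma(T') = 0$ forces $T'$ to be trivial by zero-sum-freeness of $T_0$.

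The main obstacle is precisely case (iii)/(iv): removing a single element of $S$ drops the cross number by $\frac{1}{p}$ (or $\frac{1}{q}$), overshooting $\frac{1}{pq}$, so one must pad with copies of an order-$pq$ element. The multiplicity $q - 1$ is the unique choice that simultaneously produces the required cross number contribution $\frac{q-1}{pq}$ and keeps the padded block $h^{q-1}$ from introducing a zero-sum in its $C_q^s$-projection, where $\ord(h_q) = q$. The construction crucially exploits that $G$ genuinely contains elements of order $pq$, which is where $r, s \geq 1$ enters.
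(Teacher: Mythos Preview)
Your proof is correct and takes a genuinely different route from the paper's argument. The paper splits according to the size of $\mathsf{k}(S)$: when $\mathsf{k}(S)$ exceeds $\max\{\frac{1+r(p-1)}{p},\frac{1+s(q-1)}{q}\}$ it invokes Lemma~\ref{large order} to find an element of order $pq$ and removes it (your case~(ii)); when $\mathsf{k}(S)$ is below this threshold it does not construct anything but instead appeals to Theorem~\ref{4.8}, which guarantees that $\mathsf{w}(G)$ already contains the needed value as part of a long initial arithmetic progression. Your argument replaces this appeal to Theorem~\ref{4.8} with an explicit construction: the structural observation that a minimal zero-sum sequence without order-$pq$ elements must live entirely inside $C_p^r$ or $C_q^s$ (which is in fact sharper than Lemma~\ref{large order}), followed by the padding trick $T_0\cdot h^{q-1}$ (resp.\ $h^{p-1}$) to compensate exactly for the overshoot $\tfrac{1}{p}-\tfrac{1}{pq}=\tfrac{q-1}{pq}$. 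Your approach is more self-contained and elementary, avoiding the sumset machinery behind Theorem~\ref{4.8}; the paper's approach, on the other hand, is natural in context since Theorem~\ref{4.8} has already been established and gives the result with no further construction.
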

\begin{proof}
Let $S \in \mathcal{A}(G)$. We need to show that $k(S) \in  \frac{1}{pq}+ \mathsf{w}(G)$. If $\mathsf{k}(S) > \max\{  \frac{1+r(p-1)}{p}, \frac{1+s(q-1)}{q}  \}$, then by Lemma \ref{large order}  $S$ contains an element $g$ of order $pq$ and thus $g^{-1}S$ is zero-sum free with cross number $\mathsf{k}(S)-\frac{1}{pq}$, which proves the claim.  If $\mathsf{k}(S) \le \max\{  \frac{1+r(p-1)}{p}, \frac{1+s(q-1)}{q}  \}$, then $\mathsf{k}(S) $ is in $\frac{1}{pq}+ \mathsf{w}(G)$, simply as by  Theorem \ref{4.8} the set contains every integral multiple of $\frac{1}{pq}$ from $\frac{2}{pq}$ up to $\frac{pq+(r-1)(p-1)+ (s-1)(q-1)}{pq}$.   
\end{proof}

The following result gives some insight into the structure of zero-sum free sequences of large cross number for groups of exponent $pq$ for two odd primes $p,q$ under the assumption that $p$ is larger than $q$; this assumption is implicit in the condition on $t$, if $p$ is not large enough no such $t$ exists and the result is void. In other words the result is only relevant for $p > \eta(C_q^s)$.

\begin{proposition} \label{C_p^r+C_q^s}
Let $G=C_p^r \oplus C_q^s$ for $r,s \in \mathbb{N}$ and $p >q$ primes. Let $S \in \mathcal A^{\ast}(G)$ such that
\[ \mathsf k(S) \geq \mathsf k^{\ast}(G)- \frac{t}{pq}\]
where $t\in \mathbb{N}$ such that  $t \le p - \eta(C_q^s) $ . Then $S$ contains $s(q-1)$ elements of order $q$.
\end{proposition}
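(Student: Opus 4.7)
The plan is to decompose $S = S_p S_q S_{pq}$ by the orders of its terms and to exploit the following structural observations. Since any subsequence of a zero-sum free sequence is zero-sum free, $S_p$ lives in the $p$-Sylow subgroup $C_p^r$ and so $|S_p| \le \mathsf{d}(C_p^r) = r(p-1)$; similarly $S_q$ lies in $C_q^s$ and $|S_q| \le \mathsf{d}(C_q^s) = s(q-1)$. The hypothesis on the cross number translates to
\[ \frac{|S_p|}{p} + \frac{|S_q|}{q} + \frac{|S_{pq}|}{pq} \;\ge\; \frac{r(p-1)}{p} + \frac{s(q-1)}{q} - \frac{t}{pq}. \]
If we additionally knew $|S_{pq}| \le \eta(C_q^s) - 1$, then the bounds on $|S_p|$ combined with the hypothesis $t \le p - \eta(C_q^s)$ would force $|S_q| \ge s(q-1)$ by integrality, matching the reverse inequality just noted. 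Since $|S_{pq}|$ need not be small to begin with, the core step will be a swap construction that reduces to this case while preserving zero-sum freeness and not decreasing the cross number.

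For the swap, suppose $|S_{pq}| \ge \eta(C_q^s)$ and let $\pi_q: G \to C_q^s$ be the projection. By definition of $\eta$, the sequence $\pi_q(S_{pq})$ contains a zero-sum subsequence of length in $[1,q]$, which lifts to a subsequence $T \mid S_{pq}$ with $\pi_q(\sigma(T)) = 0$ and $|T| \in [1,q]$; since each term of $S_{pq}$ has non-zero $q$-part, in fact $|T| \ge 2$. Setting $g = \sigma(T) \in C_p^r$, zero-sum freeness of $S$ forces $g \ne 0$, hence $\ord(g) = p$. I would then form $\tilde S = T^{-1} S \cdot g$ and verify that $\tilde S$ remains zero-sum free by the following case analysis: any non-trivial zero-sum subsequence $R$ of $\tilde S$ can be written $R = g^k R'$ with $R' \mid T^{-1} S$ not containing $g$; if $k$ does not exceed the multiplicity of $g$ already in $S$, then $R$ is itself a zero-sum subsequence of $S$, while otherwise $k$ exceeds that multiplicity by exactly one, in which case $g^{k-1} R' T$ is a non-trivial zero-sum subsequence of $S$; in both situations we contradict zero-sum freeness of $S$. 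A direct computation gives $\mathsf{k}(\tilde S) - \mathsf{k}(S) = \tfrac{1}{p} - \tfrac{|T|}{pq} \ge 0$, while $|\tilde S_{pq}|$ strictly decreases (by at least $2$) and $|\tilde S_q|$ is unchanged.

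Iterating the swap terminates with a zero-sum free sequence, still denoted $\tilde S$, satisfying $|\tilde S_{pq}| \le \eta(C_q^s) - 1$, $|\tilde S_p| \le r(p-1)$, $|\tilde S_q| = |S_q|$, and $\mathsf{k}(\tilde S) \ge \mathsf{k}(S)$. Substituting these bounds into the cross-number inequality for $\tilde S$ and rearranging would yield
\[ |S_q| \ge s(q-1) - \frac{t + \eta(C_q^s) - 1}{p}, \]
and the hypothesis $t \le p - \eta(C_q^s)$ makes the fractional correction strictly less than one, so integrality gives $|S_q| \ge s(q-1)$; combined with the reverse inequality $|S_q| \le s(q-1)$ equality holds. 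The step I expect to be the main obstacle is the verification that the swap preserves zero-sum freeness, which requires careful bookkeeping of the multiplicity of $g$ in case $g$ already occurs in $S_p$.
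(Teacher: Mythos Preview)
Your proof is correct. Both your argument and the paper's rest on the same combinatorial core: project $S_{pq}$ to $C_q^s$, use the definition of $\eta(C_q^s)$ to extract short zero-sums there, collapse the corresponding subsequences of $S_{pq}$ to single elements of $C_p^r$, and then invoke $\mathsf{d}(C_p^r)=r(p-1)$. The paper, however, packages this differently: it first proves a standalone lemma asserting that any sequence over $G\setminus C_q^s$ with cross number at least $\frac{r(p-1)}{p}+\frac{\eta(C_q^s)}{pq}$ already contains a zero-sum (by greedily decomposing $S_{pq}=Q_1\cdots Q_l R$ with $|R|<\eta(C_q^s)$ and observing $|S_p|+l>r(p-1)$), and then obtains the proposition by contradiction from $|S_q|\le s(q-1)-1$. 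Your route instead keeps $S$ intact and iterates a single swap $T\mapsto\sigma(T)$, which buys you a direct argument without a separate lemma but costs you the careful verification that zero-sum freeness survives each swap---a step you handle correctly, including the case where $g=\sigma(T)$ already occurs in $S_p$. The paper's version is marginally slicker since it never needs that verification (it is proving the existence of a zero-sum rather than its absence), while yours makes the monotonicity of the cross number under the swap more transparent.
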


We first establish a lemma for sequences that do not contain an element of order at most $q$.  

\begin{lemma} \label{lem_C_p^r+C_q^s}
Let $G=C_p^r \oplus C_q^s$ for $r,s \in \mathbb{N}$ and $p >q$ be primes. Let $S \in \mathcal F(G\setminus C_q^s)$ such that
	\[ \mathsf k(S) \geq   \frac{r(p-1)}{p} + \frac{\eta(C_q^s)}{pq}.\]
Then $S$ is not zero-sum free. 
\end{lemma}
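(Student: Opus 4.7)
My plan is to split $S$ by element order and combine a Davenport-constant argument on $C_p^r$ with an iterative short-zero-sum extraction on $C_q^s$. Since every element of $S$ has nonzero $C_p^r$-component, I write $S = S_p \cdot S_{pq}$, where $S_p$ collects the elements of order $p$ (lying in the subgroup $G_p := C_p^r \oplus \{0\}$) and $S_{pq}$ collects those of order $pq$; set $a = |S_p|$ and $b = |S_{pq}|$. The hypothesis $\mathsf k(S) = a/p + b/(pq) \geq r(p-1)/p + \eta(C_q^s)/(pq)$ becomes
\[
qa + b \geq qr(p-1) + \eta(C_q^s).
\]
If $a \geq r(p-1) + 1 = \mathsf d(C_p^r) + 1$, then $S_p$ is a sequence in $C_p^r$ that already exceeds the small Davenport constant, hence contains a non-trivial zero-sum subsequence, and we are done. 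So assume from now on that $a \leq r(p-1)$, in which case the displayed inequality forces $b \geq \eta(C_q^s) + q(r(p-1) - a)$.

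I now use the projection $\pi_q \colon G \to C_q^s$ onto the second factor, whose kernel is $G_p$. I iteratively extract disjoint subsequences $T_1, T_2, \ldots$ from $S_{pq}$ such that $\pi_q(T_j)$ is a short zero-sum of the projection of the current remainder. By the definition of $\eta(C_q^s)$, such a $T_j$ with $|T_j| \leq q$ exists whenever the remaining part of $S_{pq}$ still has length at least $\eta(C_q^s)$. Since each extraction removes at most $q$ terms, the procedure runs for at least
\[
k \;\geq\; \Blfloor \frac{b - \eta(C_q^s)}{q}\Brfloor + 1 \;\geq\; r(p-1) - a + 1
\]
steps, where the second bound uses $b - \eta(C_q^s) \geq q(r(p-1)-a)$. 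Each $T_j$ satisfies $\pi_q(\sigma(T_j)) = 0$, hence $\sigma(T_j) \in G_p$; if some $\sigma(T_j) = 0$, then $T_j$ itself is a non-trivial zero-sum subsequence of $S$ and we stop.

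Otherwise every $\sigma(T_j)$ is a nonzero element of $G_p \cong C_p^r$, and the concatenated sequence $S_p \cdot \sigma(T_1) \cdots \sigma(T_k)$ in $C_p^r$ has length $a + k \geq r(p-1) + 1 > \mathsf d(C_p^r)$; it therefore admits a non-trivial zero-sum subsequence. Replacing in this zero-sum each selected atom $\sigma(T_j)$ by the terms of the pairwise disjoint subsequence $T_j$ lifts it to a non-trivial zero-sum subsequence of $S$. The main obstacle is the bookkeeping in the iterative step: the hypothesis on $\mathsf k(S)$ is tuned precisely so that whenever $a$ is too small to finish immediately via Davenport in $C_p^r$, enough $T_j$'s can be peeled off (consuming at most $q$ terms each) to push the combined $C_p^r$-sequence beyond $\mathsf d(C_p^r)$; carefully tracking the $q$ elements consumed per step, together with the disjointness of the $T_j$'s needed for the lifting, is the only delicate point.
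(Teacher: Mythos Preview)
Your argument is correct and follows essentially the same approach as the paper: split $S$ into its order-$p$ part and order-$pq$ part, iteratively peel off short zero-sums in the $C_q^s$-projection of the latter, and then apply the Davenport constant of $C_p^r$ to the order-$p$ elements together with the block sums. The only cosmetic differences are your explicit case split on $a$ and the floor-function bookkeeping for the number of extractions; the paper instead writes $S_{pq}=Q_1\cdots Q_l R$ with $l$ maximal and $|R|\le \eta(C_q^s)-1$, reaching $|S_p|+l>r(p-1)$ directly.
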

\begin{proof}
We write $S = S_pS_{pq}$ where the elements in $S_p$ and $S_{pq}$ have order $p$ and $pq$, respectively. 

Let us write $S_{pq} = Q_1 \dots Q_l R$ such that $\sigma(Q_i) \in C_p^r$ and $|Q_i|\le q$ and $l$ is maximal. In other words, the images of sequences $Q_i$ are short zero-sum sequences over $G/C_p^r \cong C_q^s$. Thus, we have that $|R|\le  \eta(C_q^s)-1$.  
Note that	
\[
pq \mathsf k(S_p S_{pq})=q|S_p|+|S_{pq}|  \leq q|S_p| + ql  + \eta(C_q^s)-1 
\]
and $pq \mathsf k(S_p S_{pq}) \ge q r (p-1) + \eta(C_q^s)$.
Therefore $|S_p|+ l > r(p-1)$. Thus, $S_p \sigma(Q_1) \dots \sigma(Q_l)$ is a sequence over $C_p^r$ of length greater than  $r(p-1)$ and hence cannot be zero-sum free. It follows that $S_p Q_1 \dots Q_l$ is not zero-sum free either, which shows the claim.  
\end{proof}

\begin{proof}[Proof of Proposition \ref{C_p^r+C_q^s}]
We write $S = S_p S_q S_{pq}$ where the order of elements in $S_j$ is $j$ for each $j \in \{p,q,pq\}$. 
Since $S$ is zero-sum free it is clear that $|S_q| $ is at most $s(q-1)$. Assume $|S_q|< s(q-1)$. 
Then $k(S_pS_{pq})  \ge \frac{r(p-1)}{p} + \frac{1}{q} - \frac{t}{pq} $, 
since $p -t \ge\eta(C_q^s) $ by assumption, it follows that  $ \mathsf{k}(S_pS_{pq}) \ge \frac{r(p-1)}{p} + \frac{\eta(C_q^s)}{pq}$. 
Now, by Lemma \ref{lem_C_p^r+C_q^s} the sequence   $S_pS_{pq}$  is not zero-sum free and thus $S$ is not zero-sum free. 
This contradiction establishes the claim. 
\end{proof}

We now determine the large values in the set of cross numbers.  

\begin{corollary}\label{gap str remark_w}
Let $G=C_p^r \oplus C_q^s$ for $r,s \in \mathbb{N}$ and $p >q$ odd primes such that $p \ge  \eta(C_q^s)+2q$. 
Then
	\[ \mathsf w(G) \subseteq \frac{1}{pq}[1, pq \mathsf k^{\ast}(G)] \setminus \{\frac{1}{pq}[pq\mathsf k^{\ast}(G)-(2q-3), pq\mathsf k^{\ast}(G)-(q+1)] \cup \frac{1}{pq}[pq\mathsf k^{\ast}(G)-(q-2), pq\mathsf k^{\ast}(G)-1] \}
	\]
while $\frac{1}{pq}\{pq\mathsf k^{\ast}(G)-(2q-2), pq\mathsf k^{\ast}
(G)-q,  pq\mathsf k^{\ast}(G)-(q-1), pq\mathsf k^{\ast}(G)\} \subseteq \mathsf{w}(G)$ and moreover $\mathsf{k}(G)= \mathsf{k}^{\ast}(G)$. 
\end{corollary}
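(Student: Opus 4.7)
The plan is to apply Proposition~\ref{C_p^r+C_q^s} (valid for any $t \in [1, 2q-2]$ under the hypothesis $p \ge \eta(C_q^s)+2q$) and then analyze the structure of a zero-sum free $S$ with $\mathsf{k}(S)= \mathsf{k}^{\ast}(G)-\tfrac{t}{pq}$. The proposition forces $S = S_q S'$ with $|S_q|=s(q-1)$ elements of order $q$ and $S'$ containing only elements of order $p$ or $pq$. Let $\pi\colon G \to C_p^r$ be the canonical projection with kernel $C_q^s$. A short extension argument gives $\Sigma(S_q) = C_q^s \setminus \{0\}$: for any $g \in C_q^s \setminus \{0\}$, the sequence $S_q g$ has length $s(q-1)+1 > \mathsf{d}(C_q^s)$ and hence cannot be zero-sum free, which forces $-g \in \Sigma(S_q)$. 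This in turn shows $\pi(S')$ is zero-sum free in $C_p^r$: any non-trivial subsequence $T$ of $S'$ with $\pi(\sigma(T))=0$ satisfies $\sigma(T) \in C_q^s$, and either $\sigma(T) = 0$ contradicts that $S$ is zero-sum free, or $\sigma(T) \ne 0$ and $-\sigma(T) \in \Sigma(S_q)$ yields a matching subsequence of $S_q$ that combines with $T$ into a non-trivial zero-sum subsequence of $S$. In particular, $|\pi(S')| \le r(p-1)$.

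Setting $u = r(p-1) - |\pi(S')| \ge 0$ and denoting by $|S'|_p, |S'|_{pq}$ the number of elements of each order, the equations $|S'|_p + |S'|_{pq} = r(p-1) - u$ and $q|S'|_p + |S'|_{pq} = pq\,\mathsf{k}(S') = rq(p-1) - t$ give $|S'|_p = r(p-1) + \tfrac{u-t}{q-1}$. Integrality imposes $u \equiv t \pmod{q-1}$, and $|S'|_{pq} \ge 0$ is equivalent to $uq \le t$. Running over $t \in [0, 2q-2]$: for $t \in [1,q-2]$ only $u=0$ is admissible but $(q-1) \nmid t$; for $t \in [q+1, 2q-3]$ only $u \in \{0,1\}$ is admissible and neither $(q-1) \mid t$ nor $(q-1) \mid (t-1)$ holds (this interval is vacuous for $q=3$). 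Hence these two intervals must be gaps. For $t \in \{0, q-1, q, 2q-2\}$ the respective values $u = 0, 0, 1, 0$ are admissible.

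To realize the \emph{in} values I would start from the standard maximal zero-sum free sequence $T_0 = e_1^{p-1}\cdots e_r^{p-1} f_1^{q-1}\cdots f_s^{q-1}$, built from bases $\{e_i\}$ of $C_p^r$ and $\{f_j\}$ of $C_q^s$; this witnesses $t=0$. Deleting one $e_1$ realizes $t=q$, replacing one $e_1$ by $e_1+f_1$ realizes $t = q-1$, and replacing two $e_1$'s by two copies of $e_1+f_1$ realizes $t = 2q-2$. Each modified sequence remains zero-sum free by the routine check that any zero-sum subsequence must have every $e_i$-coordinate congruent to $0 \pmod p$; since the total $e_i$-multiplicity is at most $p-1$, this forces every $e_i$-contributing term (including $e_1+f_1$) to be absent, reducing to a subsequence of $f_1^{q-1}\cdots f_s^{q-1}$, which is itself zero-sum free.

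Finally, $\mathsf{k}(G)=\mathsf{k}^{\ast}(G)$ follows from the same structural argument: for any hypothetical $S$ with $\mathsf{k}(S) > \mathsf{k}^{\ast}(G)$, Proposition~\ref{C_p^r+C_q^s} with $t=1$ still applies, and the bound $\mathsf{k}(S') \le \mathsf{k}(\pi(S')) \le r(p-1)/p$ together with $\mathsf{k}(S_q) = s(q-1)/q$ gives $\mathsf{k}(S) \le \mathsf{k}^{\ast}(G)$, a contradiction. The main obstacle is the structural passage from ``$S$ zero-sum free in $G$'' to ``$\pi(S')$ zero-sum free in $C_p^r$'', which rests on the observation that the extremal length of $S_q$ forces $\Sigma(S_q)$ to cover $C_q^s \setminus \{0\}$; once this is in place, the parametrization and case analysis are mechanical.
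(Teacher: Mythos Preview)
Your proof is correct and follows essentially the same approach as the paper's: apply Proposition~\ref{C_p^r+C_q^s} to force $|S_q|=s(q-1)$, deduce $\Sigma(S_q)=C_q^s\setminus\{0\}$ and hence that $\pi(S')$ is zero-sum free in $C_p^r$, then analyze the possible cross numbers of $S'$; the constructions for the attained values are likewise the same in spirit. The only cosmetic difference is that you parametrize the cases via the pair $(u,t)$ and solve two linear equations with an integrality and a sign constraint, whereas the paper tabulates the small cases $|S_p|+|S_{pq}|\in\{r(p-1),\,r(p-1)-1\}$ and $|S_{pq}|\in\{0,1,2\}$ directly; both arrive at the same set of admissible values.
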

\begin{proof} 
Let $S \in \mathcal{A}^{\ast}(G)$ such that $\mathsf k(S) \geq \mathsf k^{\ast}(G) - \frac{t}{pq}$ with $t \le 2q $. 
Let $S_q, S_p$ and $S_{pq}$ be the subsequence of $S$ of elements of order $q, p$ and $pq$, respectively. By Proposition \ref{C_p^r+C_q^s} we know that $|S_q| = s(q-1)$.
 
It follows that $\Sigma (S_q) = C_q^s \setminus \{0\}$. Assume $\pi: G \to C_p^r$ be the canonical epimorphism.  It follows that  $\pi(S_pS_{pq})$ is zero-sum free in $C_p^r$. We get that $|\pi(S_pS_{pq})| \leq  r(p-1)$. 
This yields the claim on $\mathsf{k}(G)$. 

If $|S_pS_{pq}| \le r(p-1)-2$, then $\mathsf{k}(S) \le \mathsf{k}^{\ast}(G)-\frac{2q}{pq}$. 

Under the assumption that $|S_pS_{pq}| \ge r(p-1)-1$, let us observe possible values for $\mathsf k(S_pS_{pq})$. For ease of notation we set  $M=\frac{r(p-1)}{p} $, so that $\mathsf k^{\ast}(G)=\frac{s(q-1)}{q}+M$. We include all the relevant values in the table. 
Of course, for $|S_{pq}|$ still larger the value of the cross number is too small to be relevant.  
	
	\begin{center}
		\begin{tabular}{ | m{5em} || m{5em}| m{6cm} | }   
			\hline  
			$|S_p|+ |S_{pq}|$ & $|S_{pq}|$ & $\mathsf k(S_pS_{pq})$ \\ 
			\hline
			\multirow{3}{5em} {\\$r(p-1)$ \\ } & $0$ & $\frac{r(p-1)}{p} =M$ \\
			& $1$ & $\frac{r(p-1)}{p}-\frac{1}{p}+ \frac{1}{pq}=M-\frac{q-1}{pq}$ \\ 
			& $2$ & $\frac{r(p-1)}{p}- \frac{2}{p}+ \frac{2}{pq}=M-\frac{2(q-1)}{pq}$\\  
			\hline 
			\multirow{3}{5em} {\\$r(p-1)-1$\\ } & $0$ & $\frac{r(p-1)}{p}- \frac{1}{p}=M-\frac{q}{pq}$ \\
			& $1$ & $\frac{r(p-1)}{p}- \frac{2}{p}+ \frac{1}{pq}=M-\frac{2q-1}{pq}$ \\ 
			& $2$ & $\frac{r(p-1)}{p}- \frac{3}{p}+ \frac{2}{pq}=M-\frac{3q-2}{pq}$ \\  			
			 
			\hline
		\end{tabular}
		 	\end{center}
	
Thus it can be seen that $\{\mathsf k^{\ast}(G)-\frac{q-2}{pq}, \dots , \mathsf k^{\ast}(G)-\frac{1}{pq}\} \nsubseteq \mathsf w(G)$ and $\{\mathsf k^{\ast}(G)-\frac{2q-3}{pq}, \dots , \mathsf k^{\ast}(G)-\frac{q+1}{pq}\} \nsubseteq \mathsf w(G)$, as those values do not appear in the table. 

Conversely, it is easy to see that zero-sum free sequences with the parameters as indicated in the table actually exist.
Of course a zero-sum free sequence $S_q$ over $C_q^s$ of length $s(q-1)$ and  $S_p'$ over $C_p^r$ of length $r(p-1)$ or $r(p-1)-1$, respectively, exists. The sequence $S_qS_p'$ is zero-sum free.  

Moreover for $h \in C_q^s\setminus \{0\}$ and $g \mid S_p'$, the sequences $g^{-1}(g+h)S_p'$ and $g^{-1}(g+h)S_p'S_q$ are still zero-sum free (the projection of the former to $C_p^r$ remains unchanged and is thus zero-sum free)  and contain exactly one element of order $pq$. Likewise for $g_1g_2 \mid S_p'$ the sequences $g_1^{-1}(g_1+h)g_2^{-1}(g_2+h)S_p'$ and $g_1^{-1}(g_1+h)g_2^{-1}(g_2+h)S_p'$ are still zero-sum free and contain exactly two elements of order $pq$.  
\end{proof}

The proof does not really make use of the fact that $q$ is odd. But the claim is void for $q=2$. The situation is somewhat different for the proof below.

\begin{corollary}\label{gap str remark}
Let $G=C_p^r \oplus C_q^s$ for $r,s \in \mathbb{N}$ and $p >q$ odd primes such that $p \ge  \eta(C_q^s)+2q$. 
Then
	\[ \mathsf W(G) \subseteq \frac{1}{pq}[2, pq \mathsf K^{\ast}(G)] \setminus \{\frac{1}{pq}[pq\mathsf K^{\ast}(G)-(2q-3), pq\mathsf K^{\ast}(G)-(q+1)] \cup \frac{1}{pq}[pq\mathsf K^{\ast}(G)-(q-2), pq\mathsf K^{\ast}(G)-1] \}	 
	\]
while $\frac{1}{pq}\{pq\mathsf K^{\ast}(G)-(2q-2), pq\mathsf K^{\ast}(G)-q,  pq\mathsf K^{\ast}(G)-(q-1), pq\mathsf K^{\ast}(G)\} \subseteq \mathsf{W}(G)$ and moreover $\mathsf{K}(G)= \mathsf{K}^{\ast}(G)$. 
\end{corollary}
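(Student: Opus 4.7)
The plan is to deduce the statement for $\mathsf{W}(G)$ from the statement for $\mathsf{w}(G)$ already established in Corollary \ref{gap str remark_w}. The main bridge is Proposition \ref{connection between W and w}, which gives $\mathsf{W}(G)\subseteq \tfrac{1}{pq}+\mathsf{w}(G)$, combined with the arithmetic identity $pq\mathsf{K}^{\ast}(G)=pq\mathsf{k}^{\ast}(G)+1$.

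For the upper bound with its two gaps, I would simply shift the description of $\mathsf{w}(G)$ given by Corollary \ref{gap str remark_w} up by $\tfrac{1}{pq}$. The ambient interval $\tfrac{1}{pq}[1,pq\mathsf{k}^{\ast}(G)]$ becomes $\tfrac{1}{pq}[2,pq\mathsf{K}^{\ast}(G)]$, while the two forbidden intervals for $\mathsf{w}(G)$ map to $\tfrac{1}{pq}[pq\mathsf{K}^{\ast}(G)-(2q-3),pq\mathsf{K}^{\ast}(G)-(q+1)]$ and $\tfrac{1}{pq}[pq\mathsf{K}^{\ast}(G)-(q-2),pq\mathsf{K}^{\ast}(G)-1]$, matching the claim verbatim.

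For the four specific values claimed to lie in $\mathsf{W}(G)$, I would re-use the four zero-sum free sequences $T$ appearing implicitly in the proof of Corollary \ref{gap str remark_w} (those corresponding to rows $1$--$4$ of the table in that proof, with cross numbers $\mathsf{k}^{\ast}(G)$, $\mathsf{k}^{\ast}(G)-\tfrac{q-1}{pq}$, $\mathsf{k}^{\ast}(G)-\tfrac{2(q-1)}{pq}$ and $\mathsf{k}^{\ast}(G)-\tfrac{1}{p}$) and form the sequence $T\cdot(-\sigma(T))$. The standard fact that $T\cdot(-\sigma(T))$ is a minimal zero-sum sequence whenever $T$ is zero-sum free and non-trivial yields a member of $\mathcal{A}(G)$. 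To conclude that its cross number is exactly $\tfrac{1}{pq}$ larger than that of $T$, and hence that it realizes the four claimed values, it suffices to check that $\ord(-\sigma(T))=pq$. The $C_p^r$-component of $-\sigma(T)$ is the sum of a zero-sum free sequence over $C_p^r$ of length $r(p-1)$ or $r(p-1)-1$, hence non-zero; the $C_q^s$-component equals $\sigma(S_q)+\nu h$, where $\nu\in\{0,1,2\}$ is the number of shifts by the auxiliary element $h\in C_q^s\setminus\{0\}$ used in forming $T$, and $S_q$ is the zero-sum free sequence of length $s(q-1)$ over $C_q^s$. This last component can be made non-zero by choosing $h$ to avoid at most one forbidden value (using that $q$ is odd, so $2$ is invertible modulo $q$), and such $h$ exists because $|C_q^s\setminus\{0\}|\ge q-1\ge 2$.

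Finally, the equality $\mathsf{K}(G)=\mathsf{K}^{\ast}(G)$ follows by sandwiching: $\mathsf{K}^{\ast}(G)\le\mathsf{K}(G)$ is the standard lower bound, while $\mathsf{K}(G)\le\mathsf{K}^{\ast}(G)$ is immediate from the upper bound on $\mathsf{W}(G)$ derived in the first step (alternatively from $\mathsf{K}(G)\le\tfrac{1}{\exp(G)}+\mathsf{k}(G)$ together with $\mathsf{k}(G)=\mathsf{k}^{\ast}(G)$ given by Corollary \ref{gap str remark_w}). I do not anticipate any serious obstacle; the only delicate point is the order check on $-\sigma(T)$, which reduces to the small counting argument above.
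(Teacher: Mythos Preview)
Your proposal is correct and follows essentially the same route as the paper: invoke Proposition~\ref{connection between W and w} to obtain $\mathsf{W}(G)\subseteq\tfrac{1}{pq}+\mathsf{w}(G)$, shift the description from Corollary~\ref{gap str remark_w} by $\tfrac{1}{pq}$, and for the four realized values append $-\sigma(T)$ to the explicit zero-sum free sequences $T$ built there, arranging via the choice of $h$ that $\ord(\sigma(T))=pq$. Your treatment of the order check is in fact slightly more explicit than the paper's (which simply says ``use a different $h$''), but the argument is the same.
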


\begin{proof} 
We know by Lemma \ref{lem_C_p^r+C_q^s} that $\mathsf{W}(G) \subseteq \frac{1}{pq}+ \mathsf{w}(G)$. 
This proves the claim by invoking Corollary \ref{gap str remark_w}, except for the part regarding the existence of the values. 
However, to see this it suffices to note that we can construct the zero-sum free sequences in the proof of Corollary \ref{gap str remark_w} in such a way that the sum has order $pq$. It is clear that the projection of the sum on $C_p^r$ is non-zero. 
If the projection of the sum on $C_q^s$ is $0$, then we could just use a different element $h$ for our construction; indeed every other element of order $q$ would do in that case, and there is more than one, as $q\neq 2$. 
\end{proof}

It would be possible to obtain more precise results on the sets of cross numbers in case $p$ is much larger than $q$. 
Broadly speaking the gap structure in $\mathsf W(C_p^r\oplus C_q^s)$ follows the same pattern as that in the cyclic case discussed in \cite{B-C-M-P04}. 

We conclude by pointing out that the phenomenon we observed in this section suggest that sets of cross numbers are rarely arithmetic progressions. The point is that zero-sum free sequences whose cross number is maximal should contain only elements whose order is not the exponent. Starting from such a sequence, minor changes will usually not result in a sequence whose cross number is merely $\frac{1}{\exp(G)}$ less than the original sequence. 
  
\section*{Acknowledgments}
The authors are grateful to A. Geroldinger for encouragement and guidance during the work on this paper. In particular, they thank him for his contribution of Theorem \ref{3.4}.

\providecommand{\bysame}{\leavevmode\hbox to3em{\hrulefill}\thinspace}
\providecommand{\MR}{\relax\ifhmode\unskip\space\fi MR }
\providecommand{\MRhref}[2]{%
  \href{http://www.ams.org/mathscinet-getitem?mr=#1}{#2}
}
\providecommand{\href}[2]{#2}

\end{document}